\newtheorem{theorem}{Theorem}[section]
\newtheorem{lemma}[theorem]{Lemma}
\newtheorem{proposition}[theorem]{Proposition}
\newtheorem{example}[theorem]{Example}
\newtheorem{problem}{Problem}
\newtheorem{definition}[theorem]{Definition}
\newcommand{\bF}{\textbf{F}}
\newcommand{\cF}{{\mathcal F}}
\newcommand{\cG}{{\mathcal G}}
\newcommand{\cH}{{\mathcal H}}
\newcommand{\cR}{{\mathcal R}}
\newcommand{\del}{\delta}
\newcommand{\ep}{\varepsilon}
\newcommand{\floor}[1]{\left\lfloor#1\right\rfloor}
\newcommand{\ceiling}[1]{\left\lceil#1\right\rceil}
\newcommand{\ex}{\mathrm{ex}}
\newcommand{\urltilde}{\kern -.15em\lower .7ex\hbox{~}\kern .04em}
\title{On the co-degree threshold for the Fano plane}
\author{
Louis DeBiasio\thanks{Dept. of Mathematics, Miami University, Oxford, OH 45056, USA. E-mail: debiasld@muohio.edu.}
 \quad and \quad
Tao Jiang\thanks{Dept. of Mathematics, Miami University, Oxford,
OH 45056, USA. E-mail: jiangt@muohio.edu. }  }
\date{\today}
\begin{document}
\maketitle

\begin{abstract}
Given a $3$-graph $H$, 
let $\ex_2(n, H)$ denote the maximum value of the minimum co-degree of a $3$-graph on $n$ vertices which does not contain a copy of $H$.
Let $\bF$ denote the Fano plane, which is  the $3$-graph $\{axx',ayy',azz',xyz',xy'z,x'yz,x'y'z'\}$. 
Mubayi \cite{M} proved that $\ex_2(n,\bF)=(1/2+o(1))n$ and conjectured that $\ex_2(n, \bF)=\floor{n/2}$ for sufficiently large $n$.  Using a very sophisticated quasi-randomness argument, Keevash \cite{K} proved Mubayi's conjecture.  Here we give a simple proof of Mubayi's conjecture by using a class of $3$-graphs that we call rings.  We also determine the Tur\'an density of the family of rings.
\end{abstract}

\section{Introduction}

For a family $\mathcal{H}$ of $k$-graphs, let $\ex(n, \mathcal{H})$ denote the maximum number of edges in an $n$-vertex $k$-graph which contains no member of $\mathcal{H}$.  Determining $\ex(n, \mathcal{H})$ is a fundamental question in graph theory which becomes extremely difficult when $k\geq 3$.  Let $\pi(\mathcal{H})=\lim_{n\to \infty}\frac{\ex(n, \mathcal{H})}{\binom{n}{k}}$ and call this value the \emph{Tur\'an density} of $\mathcal{H}$ (as has been pointed out many times, it is easy to show that this limit exists). 
When $\mathcal{H}$ consists of a single graph $H$, we write $\pi(H)$ for $\pi(\mathcal{H})$. Let $K_4^3$ denote the complete $3$-graph on four vertices. Over 70 years ago, Tur\'an famously conjectured that $\pi(K_4^3)=\frac{5}{9}$, but this conjecture is still unproved \cite{T}.  In fact, when $k\geq 3$ there are very few $k$-graphs for which the Tur\'an density is known (see \cite{K2} for a detailed account).  Despite this general difficulty, there is a special $3$-graph called the \emph{Fano plane} for which much is known.

The Fano plane, denoted $\textbf{F}$, is the projective geometry of dimension $2$ over the field with $2$ elements; alternatively, $\textbf{F}$ is the $3$-graph on seven vertices $\{a,x,y,z,x',y',z'\}$ with the seven edges $\{axx',ayy',azz',xyz',xy'z,x'yz,x'y'z'\}$.  
Let $B(n)$ denote the balanced complete bipartite $3$-graph, which is obtained by partitioning
a set of $n$ vertices into parts of size $\ceiling{n/2}$ and $\floor{n/2}$ and taking
as edges all the triples intersecting both parts. 
Since $B(n)$ is $2$-colorable and it is easy to see that $\bF$ is not, $B(n)$ contains no copy of $\bF$. Therefore, $\ex(n,\bF)\geq e(B(n))= 
\binom{n}{3}-\binom{\floor{n/2}}{3}-\binom{\ceiling{n/2}}{3}$.
S\'os \cite{Sos} conjectured that this lower bound is asymptotically best possible and hence $\pi(\textbf{F})=\frac{3}{4}$.  
A few decades later, de Caen and F\"uredi \cite{DF} proved S\'os' conjecture via a clever use of  so-called link graphs.  A few years later, Keevash and Sudakov \cite{KS} and independently F\"uredi and Simonovits \cite{FS} proved the exact counterpart of this result; that is, $\ex(n, \textbf{F})=\binom{n}{3}-\binom{\floor{n/2}}{3}-\binom{\ceiling{n/2}}{3}$ for sufficiently large $n$.

Let $G$ be a $k$-graph with vertex set $V$.   Given any subset $U\subseteq V$, $|U|\leq k$, 
the {\it degree} of $U$, denoted by $d(U)$, is the number of edges of $G$ that contain $U$.  For simplicity, when $U$ consists
of one vertex $x$  or two vertices $x$ and $y$, we write
 $d(x)$, and $d(x,y)$ instead of $d(\{x\})$ and $d(\{x,y\})$, respectively.  
When $k=3$, we call $d(x,y)$ the {\it co-degree} of $x$ and $y$, while the 
set of vertices $z$ such that $xyz\in E(G)$ is called the {\it co-neighborhood} of $x,y$ and
will be denoted by $N(x,y)$.
For each integer $0\leq \ell \leq k$, let  $\del_{\ell}(G) = \min\{d(U): U\subseteq V, |U|=\ell\}$. We call
$\del_{\ell}(G)$ the minimum {\it $\ell$-degree} of $G$.  For a family $\mathcal{H}$ of $k$-graphs, let $\ex_\ell(n, \mathcal{H})$ denote the maximum value of $\delta_\ell(G)$ in an $n$ vertex $k$-graph $G$ which contains no member of $\mathcal{H}$ and let $\pi_\ell(\mathcal{H})=\lim_{n\to \infty}\frac{\ex_\ell(n,\mathcal{H})}{\binom{n-\ell}{k-\ell}}$.  Mubayi and Zhao \cite{MZ} prove that this limit exists in the case $\ell=k-1$ and Lo and Markstr\"om \cite{LM} prove that this limit exists for all $0\leq \ell\leq k-1$ (a fact previously sketched by Keevash \cite{K}). Note that the case $\ell=0$ just reduces to $\pi(\mathcal{H})$. 
When $k=3$, we call $\pi_2(\mathcal{H})$ the {\it co-degree density} of
$\mathcal{H}$. For general $k$-graphs, a simple averaging argument shows that $\pi_i(\mathcal{H})\geq \pi_j(\mathcal{H})$ when $i\leq j$
(see \cite{K2} Section 13.2). It is also pointed out in \cite{K2} Section 13.2 that for any graph $H$,
$\pi_1(H)=\pi(H)$. The same argument applies to any finite family $\cH$ as well.

\begin{proposition}  \label{min-degree}
For a finite family $\mathcal{H}$ of $k$-graphs, $\pi_1(\mathcal{H})=\pi(\mathcal{H})$.
\end{proposition}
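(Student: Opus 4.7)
One direction is essentially free: the general averaging fact $\pi_0(\mathcal{H}) \geq \pi_1(\mathcal{H})$ cited just above the statement gives $\pi(\mathcal{H}) \geq \pi_1(\mathcal{H})$ immediately. (Equivalently, $\sum_v d(v) = k\cdot e(G)$ yields $e(G) \geq \frac{n}{k}\delta_1(G)$, so a minimum degree of $c\binom{n-1}{k-1}$ forces $c\binom{n}{k}$ edges, whence $c \leq \pi(\mathcal{H}) + o(1)$.) The content is the reverse inequality $\pi_1(\mathcal{H}) \geq \pi(\mathcal{H})$, which I would prove by a standard iterative deletion of low-degree vertices from a near-extremal $\mathcal{H}$-free host.

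Fix small $\epsilon > 0$. For large $N$, let $G$ be an $\mathcal{H}$-free $k$-graph on $N$ vertices with $e(G) \geq (\pi(\mathcal{H}) - \epsilon)\binom{N}{k}$. Starting from $G_0 = G$, repeatedly pick a vertex of $G_i$ (on $n_i$ vertices) whose degree is strictly less than $(\pi_1(\mathcal{H}) + 2\epsilon)\binom{n_i - 1}{k-1}$ and delete it, producing $G_{i+1}$; terminate at some $G_t$ on $n_t$ vertices in which every vertex meets this threshold.

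The key estimate is that $n_t$ is linear in $N$. Each deletion destroys at most $(\pi_1(\mathcal{H}) + 2\epsilon)\binom{n_i - 1}{k-1}$ edges, and the hockey-stick identity $\sum_{m=n_t}^{N-1}\binom{m}{k-1} = \binom{N}{k} - \binom{n_t}{k}$ bounds the cumulative loss by $(\pi_1(\mathcal{H}) + 2\epsilon)[\binom{N}{k} - \binom{n_t}{k}]$. Combining this with the lower bound on $e(G)$ and the trivial bound $e(G_t) \leq \binom{n_t}{k}$ rearranges to $\binom{n_t}{k} \geq c\binom{N}{k}$ for a positive constant depending on $\epsilon$, provided the gap $\pi(\mathcal{H}) - \pi_1(\mathcal{H}) - O(\epsilon)$ is positive; hence $n_t \geq c'N$.

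So if one assumes for contradiction that $\pi(\mathcal{H}) > \pi_1(\mathcal{H})$ and chooses $\epsilon$ strictly smaller than, say, a third of the gap, then for $N$ large, $G_t$ is an $\mathcal{H}$-free $k$-graph on arbitrarily many vertices with $\delta_1(G_t) \geq (\pi_1(\mathcal{H}) + 2\epsilon)\binom{n_t - 1}{k-1}$, directly contradicting $\pi_1(\mathcal{H}) = \lim_{n\to\infty}\ex_1(n,\mathcal{H})/\binom{n-1}{k-1}$. The argument is mostly bookkeeping; the only step requiring any thought is the hockey-stick calculation, which guarantees the surviving subgraph is large enough to invoke the definition of $\pi_1(\mathcal{H})$ as a limit.
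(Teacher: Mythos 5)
Your proof is correct and follows essentially the same approach as the paper: one direction is the cited averaging inequality $\pi_0(\mathcal{H}) \geq \pi_1(\mathcal{H})$, and the other direction is obtained by iteratively deleting low-degree vertices from a dense $\mathcal{H}$-free host to produce a linearly-large subgraph with minimum degree above $\pi_1(\mathcal{H})$, forcing a contradiction. The only difference is one of exposition: the paper delegates the deletion step to a citation (\cite{K2}, Proposition 4.2), whereas you reprove it explicitly via the hockey-stick identity, but the underlying argument is the same.
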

\begin{proof} (sketch).  Let $a=\pi_1(\cH)$.
Let $\ep>0$ be any small positive real. Let $n$ be sufficiently large as a function of $\ep$.
Let $G$ be a $k$-graph with $e(G)>(a+\ep)\binom{n}{k}$. By \cite{K2} Proposition 4.2, $G$ contains
a subgraph $G'$ on $m=\Omega(n)$ vertices with $\delta_1(G')\geq (a+\frac{\ep}{2})\binom{m-1}{k-1}$.
Since $\pi_1(\cH)=a$ and $m\to \infty$ as $n\to \infty$, when $n$ is large enough, we have $\delta_1(G')>\ex_1(\cH)$.
So $G'$ contains a member of $\cH$ and therefore $G$ contains a member of $\cH$.
\end{proof}

So the minimum degree problem is essentially the same as the Tur\'an problem. The minimum co-degree problem
however is drastically different. For instance, there are $3$-graphs $H$ with $\pi(H)$ arbitrarily close to $1$ and yet $\pi_2(H)=0$ (see \cite{MZ}).
In general, there has not been a very good understanding  of the relationship between $\pi(H)$ and $\pi_2(H)$
(see \cite{MZ} and \cite{LM} for detailed discussions).
Similar to the situation with the Tur\'an density, not much is known about $\pi_2(H)$ even for small graphs $H$ such as $K_4^3$ (in this case Czygrinow and Nagle \cite{CN} conjectured that $\pi_2(K_4^3)=\frac{1}{2}$).  
Mubayi \cite{M} initiated the study of $\ex_2(n,\bF)$, where $\bF$ is the Fano plane.
As pointed out earlier, $B(n)$ contains no copy of $\bF$.
So,  $\ex_2(n,\bF)\geq \delta_2(B(n))=\floor{n/2}$. 

Mubayi \cite{M} proved an asymptotically matching upper bound thus establishing $\pi_2(\textbf{F})=\frac{1}{2}$.  
He further conjectured that $\ex_2(n,\textbf{F})=\floor{\frac{n}{2}}$, for sufficiently large $n$. 
This was later proved by Keevash \cite{K} using a very sophisticated argument involving hypergraph regularity, quasi-randomness, and stability (We should mention that Keevash proves the stronger statement that the extremal example is ``stable".  Also, the scope of Keevash's paper is not limited to the problem of determining the co-degree threshold for the Fano plane.).
In this paper, we give a simple proof of Mubayi's conjecture which is in the same spirit as Mubayi's original proof of $\pi_2(\textbf{F})=\frac{1}{2}$. Our main result is

\begin{theorem}\label{main}
There exists $n_0$ such that if $n\geq n_0$, then $\ex_2(n, \bF)=\floor{\frac{n}{2}}$.
\end{theorem}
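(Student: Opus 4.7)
The plan is a two-phase stability-and-cleaning argument in which the family of rings acts as the key structural tool, sidestepping the hypergraph regularity machinery used by Keevash. Throughout, let $G$ be an $\bF$-free 3-graph on $n$ vertices satisfying $\delta_2(G)\ge \lfloor n/2\rfloor+1$; the goal is to derive a contradiction.

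In the first phase, I would develop a co-degree Tur\'an result for the ring family $\cR$, presumably $\pi_2(\cR)=\frac{1}{2}$ (note that $B(n)$ is $2$-colorable and so cannot contain an ``odd'' ring). The proof of this should be essentially combinatorial, via link-graph style arguments as in de Caen--F\"uredi, and avoid regularity. A stability statement would follow: any $\cR$-free 3-graph with $\delta_2 \ge (\tfrac12 + o(1))n$ admits a near-balanced bipartition $V(G) = A\cup B$ in which all but $o(n^3)$ edges cross the partition. Since rings embed into $\bF$, this stability transfers to the $\bF$-free hypothesis on $G$.

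In the second phase, I would bootstrap this approximate bipartition to an exact one. Using a short symmetrization argument combined with the hypothesis $\delta_2(G)\ge \lfloor n/2\rfloor+1$, I expect the balance $|A|=\lceil n/2\rceil$, $|B|=\lfloor n/2\rfloor$ to be forced. The decisive step is to rule out any edge of $G$ lying entirely within $A$ or entirely within $B$. Suppose $e=\{u_1,u_2,u_3\}\subseteq A$. Each pair $u_iu_j$ has co-degree at least $\lfloor n/2\rfloor+1$, and by near-bipartiteness almost all of these co-neighbors lie in $B$. I would then complete a copy of $\bF$ by selecting the remaining four vertices (one ``apex'' and three ``outer'' vertices) so that the other six faces of $\bF$ appear among the crossing edges of $G$; a counting argument, exploiting the ring-free and therefore highly constrained structure of the internal part of $G$, should show that such a completion always exists.

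The main obstacle is in Phase 2: producing a concrete combinatorial embedding of $\bF$ from a single internal edge $e$, and arranging the notion of ``ring'' so that simultaneously (i) $\pi_2(\cR)=\frac{1}{2}$ is easy to prove, (ii) rings sit inside $\bF$ (so that $\bF$-freeness implies ring-freeness), and (iii) the absence of rings inside $G$ precludes precisely the configurations that would block the completion of $\bF$ around $e$. Calibrating the definition of ``ring'' to meet all three demands is the central design problem of the proof; once this calibration is done, each phase should be quantitative and elementary, yielding both the simpler proof of Mubayi's conjecture and the promised determination of $\pi_2(\cR)$.
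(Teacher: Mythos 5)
Your proposal misidentifies the role that rings play, and this misidentification propagates through both phases.

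First, rings do not embed into $\bF$, and the proof does not rest on any implication of the form ``$\bF$-free $\Rightarrow$ ring-free.'' For example $R_2 = K_4^3$, and the Fano plane (being a linear hypergraph) contains no copy of $K_4^3$. The actual direction of the argument is the opposite: one shows that the hypothesis on $G$ forces a blown-up ring inside $G$, and then one uses that ring to \emph{build} a copy of $\bF$ inside $G$.

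Second, the relevant density is wrong. You conjecture $\pi_2(\cR)=\tfrac12$, but the paper shows $\pi_2(\cR_{\le t}) \le \sqrt{2}/\sqrt{t}$, so in fact the co-degree density of the ring family tends to $0$; the value $\tfrac12$ is the \emph{ordinary} Tur\'an density of $\cR$ (Section~\ref{turan}). It is precisely because the co-degree threshold for rings is far below $n/2$ that a 3-graph with $\delta_2 > n/2$ must contain a ring (indeed a 2-blowup of one), which is the first half of the argument. Your related observation that $B(n)$ ``cannot contain an odd ring'' is also false: coloring the $x_i$ with one color and the $y_i$ with the other shows every $R_t$ is $2$-colorable, so $B(n)$ contains all rings.

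Third, the mechanism in Phase~2 is not needed, and a stability/cleaning argument is exactly the technology the paper is designed to avoid (that is Keevash's route). The actual finishing step is a short double count: having found a copy of $R_t(2)$ in $G$, one uses the $(2t,t+1)$-property of $R_t$ (Lemma~\ref{edge}) together with the exact bound $\delta_2(G)\ge \lfloor n/2\rfloor+1$. Summing the co-degree sets $C_v = N(v,v')$ over the $2t$ vertices $v$ of $R_t$ gives strictly more than $tn$ incidences, hence some $u^*$ lies in more than $t$ of the $C_v$; by the $(2t,t+1)$-property those $>t$ vertices of $R_t$ must span an edge $xyz$, and then $\{x,x',y,y',z,z',u^*\}$ is a copy of $\bF^*\supseteq\bF$. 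This is where the ``$+1$'' in the co-degree hypothesis and the $(2t,t+1)$-property do their work, and it replaces your entire approximate-bipartition and completion machinery. So the calibration of the ring family you identify as the ``central design problem'' is real, but the three demands you list are not the correct ones: what is needed is (i) small \emph{co-degree} density for rings and their blowups, and (ii) the $(2t,t+1)$-property, not containment of rings in $\bF$.
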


Since we are giving a new proof of an old result, it is worth mentioning that we only need $n_0$ to be large enough so that ``supersaturation" holds (see Section \ref{lemmas}).  While we do not make an attempt to compute the value of $n_0$, it is considerably smaller than the value of $n_0$ needed for the use of regularity in \cite{K}.

The paper is organized as follows.  In Section \ref{lemmas} we give some Lemmas and introduce a family of $3$-graphs called
{\it rings}.  In Section \ref{fano} we prove Theorem \ref{main} by making use of the family of rings. In Section \ref{turan} we determine the Tur\'an density of the family of rings.  
 Finally, in Section \ref{remarks}, we conclude with some remarks and open problems.

\section{Lemmas}\label{lemmas}

For any $k$-graph $G$, the \emph{$s$-blowup} of $G$, denoted $G(s)$, is the graph obtained from $G$ by cloning each vertex $s$ times.  For a family of $k$-graphs $\mathcal{H}$, let $\mathcal{H}(s)=\{H(s): H\in \mathcal{H}\}$.   Erd\H{o}s \cite{E} used supersaturation to show 

\begin{lemma}  \label{erdos-blowup}\cite{E}
For any finite family of $k$-graphs $\cH$ and any positive integer $s$,
$\pi(\mathcal{H})=\pi(\mathcal{H}(s))$.
\end{lemma}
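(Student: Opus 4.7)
The plan is to prove the two inequalities $\pi(\cH) \le \pi(\cH(s))$ and $\pi(\cH(s)) \le \pi(\cH)$ separately. The first is the trivial direction: for every $H \in \cH$, the graph $H$ embeds into $H(s)$, so any $k$-graph containing $H(s)$ already contains $H$. Consequently every $\cH$-free $k$-graph is also $\cH(s)$-free, so $\ex(n,\cH) \le \ex(n,\cH(s))$ for all $n$, yielding $\pi(\cH) \le \pi(\cH(s))$.

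The substantive direction is $\pi(\cH(s)) \le \pi(\cH)$. Fix $\eps>0$; I will show that for all sufficiently large $n$, any $n$-vertex $k$-graph $G$ with $e(G) \ge (\pi(\cH)+\eps)\binom{n}{k}$ contains a copy of $H(s)$ for some $H\in \cH$. The first step is \emph{supersaturation}: by a standard averaging argument (e.g.\ the Erd\H{o}s--Simonovits supersaturation lemma), there exist $c=c(\eps,\cH)>0$ and some fixed $H \in \cH$ with $v$ vertices such that $G$ contains at least $c n^{v}$ (unlabeled) copies of $H$, and hence at least $c'n^{v}$ ordered/labeled copies, where a labeled copy is an injection $\phi:V(H)\to V(G)$ whose image spans a copy of $H$.

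The second step is to apply Erd\H{o}s's theorem on complete multipartite uniform hypergraphs to harvest a blowup. Label $V(H)=\{w_1,\dots,w_v\}$ and form the auxiliary $v$-partite $v$-uniform hypergraph $\cA$ with parts $V_1,\dots,V_v$, each a disjoint copy of $V(G)$, where $(x_1,\dots,x_v) \in V_1\times\cdots\times V_v$ is an edge of $\cA$ iff $w_i\mapsto x_i$ extends to a labeled copy of $H$ in $G$. Then $e(\cA) \ge c' n^{v}$, i.e.\ $\cA$ has positive edge density. Erd\H{o}s's classical theorem asserts that a $v$-partite $v$-uniform hypergraph on parts of size $n$ with at least $c' n^{v}$ edges contains, for $n$ large, a complete $v$-partite subhypergraph with $s$ vertices in each part. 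Such a subhypergraph in $\cA$ corresponds precisely to a copy of $H(s)$ in $G$: the $s$ chosen vertices in $V_i$ serve as the $s$ clones of $w_i$, and completeness guarantees that every transversal $v$-tuple is a labeled copy of $H$.

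The only point requiring care is the supersaturation step, which must output a single fixed $H\in \cH$ receiving $\Omega(n^{v(H)})$ copies (rather than $\cH$ in aggregate); this follows because $\cH$ is finite, so pigeonholing on $H\in\cH$ costs only a factor $|\cH|$. The rest is a direct application of Erd\H{o}s's theorem on blowups in uniform hypergraphs, which is purely combinatorial and dimension-free. I expect no real obstacle beyond making these counting constants behave; the argument is standard and requires no appeal to regularity.
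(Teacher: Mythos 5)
The paper itself does not prove this lemma --- it is stated with a citation to Erd\H{o}s \cite{E}, noting only that the argument uses supersaturation. Your proof is a correct reconstruction of exactly that route: supersaturation to obtain $\Omega(n^{v(H)})$ labeled copies of a fixed $H \in \cH$, then Erd\H{o}s's theorem on complete multipartite subhypergraphs of the auxiliary $v$-partite $v$-uniform ``copies of $H$'' hypergraph (and note that your requirement that edges of the auxiliary hypergraph arise from \emph{injective} labeled copies automatically forces the $s$-sets $S_1,\dots,S_v$ to be pairwise disjoint in $V(G)$, so the resulting blowup is genuine); nothing further is needed.
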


 Keevash and Zhao \cite{KZ} proved an analogous result for the co-degree density.

\begin{lemma}\label{codegreeblowup}\label{KZ}
For any finite family of $k$-graphs $\cH$ and any positive integer $s$,
$\pi_2(\mathcal{H})=\pi_2(\mathcal{H}(s))$.
\end{lemma}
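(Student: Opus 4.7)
The plan is to prove the two inequalities separately. The direction $\pi_2(\mathcal{H}) \le \pi_2(\mathcal{H}(s))$ is immediate: any copy of $H(s)$ in a $k$-graph contains a copy of $H$ (take one representative from each blown-up class), so an $\mathcal{H}$-free $k$-graph is automatically $\mathcal{H}(s)$-free, whence $\ex_2(n,\mathcal{H}) \le \ex_2(n,\mathcal{H}(s))$ for every $n$.

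The substance is the reverse inequality $\pi_2(\mathcal{H}(s)) \le \pi_2(\mathcal{H})$. Fix $\eps>0$, set $a=\pi_2(\mathcal{H})$, and let $G$ be an $n$-vertex $k$-graph with $\delta_2(G)\ge (a+\eps)\binom{n-2}{k-2}$; the goal is to find a copy of $H(s)$ for some $H\in\mathcal{H}$ once $n$ is large. First I would establish a co-degree supersaturation statement. Pick a large constant $m$ (depending on $\eps$ and $\mathcal{H}$) and let $T$ be a uniformly random $m$-subset of $V(G)$. For a pair $\{x,y\}\subseteq T$, the quantity $d_{G[T]}(x,y)$ is a sum of $\{0,1\}$-indicators, one for each $(k-2)$-subset of $N(x,y)$, whose conditional mean is at least $(a+\eps)\binom{m-2}{k-2}$. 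Hoeffding-type concentration for hypergeometric random variables (directly for $k=3$; via Janson's inequality for $k\ge 4$, since the underlying count is of $(k-2)$-subsets falling into a random set) gives exponential deviation bounds, and a union bound over the $\binom{m}{2}$ pairs shows that with positive probability $\delta_2(G[T])\ge (a+\eps/2)\binom{m-2}{k-2}$. For such a $T$, the definition of $\pi_2(\mathcal{H})=a$ forces $G[T]$ to contain some $H\in\mathcal{H}$. Double counting over all $m$-subsets, together with pigeonhole over the finite family, then produces a fixed $H_0\in\mathcal{H}$ such that $G$ contains $\Omega(n^{v(H_0)})$ distinct copies of $H_0$.

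Second, a routine blowup extraction converts many copies of $H_0$ into a single copy of $H_0(s)$. Label $V(H_0)=\{v_1,\dots,v_h\}$, randomly balanced-partition $V(G)=V_1\sqcup\cdots\sqcup V_h$, and form the $h$-partite $h$-uniform auxiliary hypergraph $\Gamma$ whose edges are the tuples $(w_1,\dots,w_h)\in V_1\times\cdots\times V_h$ for which $v_i\mapsto w_i$ embeds $H_0$ into $G$. Linearity of expectation gives $e(\Gamma)=\Omega(n^h)$, and Erd\H{o}s's theorem on complete $h$-partite $h$-uniform hypergraphs forces $\Gamma$ to contain a $K^{(h)}_{s,\dots,s}$ once $n$ is large. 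That complete blowup of $K_h^{(h)}$ is exactly a copy of $H_0(s)$ in $G$.

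The main obstacle is Step~1. In Erd\H{o}s's proof of Lemma~\ref{erdos-blowup} for ordinary Tur\'an density, a simple averaging argument suffices because edge count is additive; here, the minimum co-degree is a minimum rather than an average, so passing to a small random subhypergraph and preserving the co-degree condition requires simultaneous concentration of $\binom{m}{2}$ dependent statistics. Exponential concentration for hypergeometric-like counts (and Janson's inequality when $k\ge 4$) is strong enough to absorb the union bound, but this is the delicate conceptual point that distinguishes the co-degree density setting from the usual Tur\'an framework. Once Step~1 is in hand, the blowup extraction in Step~2 is standard.
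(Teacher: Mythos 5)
The paper does not prove this lemma itself: it cites Keevash and Zhao and remarks that it follows by the ``same supersaturation argument'' used for the preceding Erd\H{o}s blow-up lemma. Your proof --- a co-degree supersaturation step via concentration over random $m$-subsets, followed by standard blow-up extraction using the Erd\H{o}s box theorem --- is exactly that supersaturation route and is correct; the only small quibble is that for $k\geq 4$ a bounded-differences (Azuma--McDiarmid) bound on the vertex-exposure martingale is a cleaner fit for the uniform random $m$-subset model than Janson's inequality, which is tailored to the binomial random-subset model.
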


The same supersaturation argument in fact gives
\begin{lemma}\label{blowup}
For any finite family of $k$-graphs $\cH$ and any positive integer $s$, and any $j$, $0\leq j\leq k-1$,
$\pi_j(\mathcal{H})=\pi_j(\mathcal{H}(s))$.
\end{lemma}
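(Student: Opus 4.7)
The plan is to adapt the supersaturation argument behind Lemmas \ref{erdos-blowup} and \ref{codegreeblowup}, now working with respect to the minimum $j$-degree. The inequality $\pi_j(\cH) \leq \pi_j(\cH(s))$ is immediate: each $H \in \cH$ embeds in $H(s)$ (pick one clone of each vertex), so any $\cH$-free $k$-graph is automatically $\cH(s)$-free, giving $\ex_j(n, \cH) \leq \ex_j(n, \cH(s))$ and hence the desired inequality on densities. The bulk of the work lies in showing the reverse inequality $\pi_j(\cH(s)) \leq \pi_j(\cH)$.

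For this, write $a = \pi_j(\cH)$ and fix any $\eps > 0$. Suppose $G$ is a $k$-graph on $n$ vertices (with $n$ large) satisfying $\delta_j(G) \geq (a+\eps)\binom{n-j}{k-j}$. I would first establish a supersaturation claim: for some $H \in \cH$, $G$ contains $\Omega(n^{v(H)})$ copies of $H$. If not, then since $\cH$ is finite, a set of only $o(n)$ vertices of $G$ meets every copy of every $H \in \cH$; removing such a set yields an $\cH$-free subgraph $G'$ on $n' = (1-o(1))n$ vertices. The key point is that deleting $o(n)$ vertices decreases the $j$-degree of any surviving $j$-set by at most $o(n) \cdot \binom{n-j-1}{k-j-1} = o\bigl(\binom{n-j}{k-j}\bigr)$, so $\delta_j(G') \geq (a+\eps/2)\binom{n'-j}{k-j}$ for $n$ large, contradicting the definition of $\pi_j(\cH) = a$.

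With supersaturation in hand, the blow-up step is standard: fix the $H \in \cH$ furnished by supersaturation and set $t = v(H)$. Labeling the vertices of $H$ gives an ordered count of $\Omega(n^t)$ copies. Taking a balanced random partition $V(G) = V_1 \cup \cdots \cup V_t$, a positive fraction of these ordered copies respect the partition, producing a $t$-partite $t$-uniform auxiliary hypergraph with $\Omega(n^t)$ edges on parts of size $\Theta(n)$. The K\H{o}v\'ari-S\'os-Tur\'an theorem for $t$-partite hypergraphs then supplies pairwise disjoint $s$-subsets $W_i \subseteq V_i$ such that every tuple in $W_1 \times \cdots \times W_t$ is an ordered copy of $H$, and this assembles into the desired copy of $H(s) \subseteq G$.

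The main obstacle is the supersaturation step, specifically the bookkeeping that shows deleting $o(n)$ vertices preserves the minimum $j$-degree condition up to an additive $o(1)$ loss; this uses $\binom{n-j-1}{k-j-1} = O(n^{k-j-1})$ to bound the degree loss per deleted vertex, and $n' \sim n$ to pass from $\binom{n-j}{k-j}$ to $\binom{n'-j}{k-j}$. The remainder is a routine amalgam of the arguments in \cite{E} and \cite{KZ}, and the finiteness of $\cH$ ensures the ``destroy all copies by removing $o(n)$ vertices'' step causes no trouble.
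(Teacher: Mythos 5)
Your first inequality $\pi_j(\cH)\leq\pi_j(\cH(s))$ and the final blow-up step (random balanced partition plus K\H{o}v\'ari--S\'os--Tur\'an) are fine and match the standard approach. The gap lies squarely in the supersaturation step. You assert that if every $H\in\cH$ has only $o(n^{v(H)})$ copies in $G$, then ``a set of only $o(n)$ vertices of $G$ meets every copy of every $H\in\cH$.'' This implication is not justified, and it is false in general: a graph consisting of $n/3$ vertex-disjoint triangles has $o(n^3)$ triangles yet no hitting set of size $o(n)$. The degree hypothesis does not rescue the claim either; the natural greedy argument (repeatedly delete a vertex lying in the largest number of remaining copies) removes only a $v(H)/n$ fraction of the copies per step, so destroying $N=n^{v(H)-\delta}$ copies would take on the order of $n\log n$ deletions, far more than $o(n)$. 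Since the rest of your contradiction argument hinges on this assertion, the proof does not go through.

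The actual argument behind Lemmas~\ref{erdos-blowup} and \ref{codegreeblowup} (and the one the paper is alluding to for Lemma~\ref{blowup}) is the Erd\H{o}s sampling argument: fix a large constant $m$ with $\ex_j(m,\cH)<(a+\eps/2)\binom{m-j}{k-j}$, take a uniformly random $m$-subset $S\subseteq V(G)$, and show that with probability bounded away from $0$ every $j$-subset $T\subseteq S$ has $d_{G[S]}(T)\geq(a+\eps/2)\binom{m-j}{k-j}$, so that $\delta_j(G[S])$ is above the extremal threshold and $S$ contains a member of $\cH$; this yields $\Omega(n^m)$ good $m$-sets and hence $\Omega(n^{v(H)})$ copies of some fixed $H$. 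For $j=0$ the positive-fraction claim is a one-line averaging (Markov), but for $j\geq1$ one needs a concentration estimate: for a fixed $j$-set $T$ with $d_G(T)\geq(a+\eps)\binom{n-j}{k-j}$, the random variable $d_{G[S]}(T)$ has the right expectation but is not automatically close to it; one uses an Azuma/martingale or second-moment bound to show that the probability $d_{G[S]}(T)$ falls below $(a+\eps/2)\binom{m-j}{k-j}$ is less than $1/\binom{m}{j}$ for $m$ large, so that a union bound over the $\binom{m}{j}$ choices of $T\subseteq S$ succeeds. This concentration step is the content that ``few copies implies small hitting set'' was supposed to replace, and it is the piece your proof is missing.
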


We also make the following trivial observation based on the definitions.

\begin{proposition} \label{subgraph}
Let $\cH$ and $\cG$ be two families of $k$-graphs. Let $j\in\{0,\ldots, k-1\}$.  Suppose that for every member $G\in \cG$,
some subgraph of $G$ belongs to $\cH$. Then $\ex_j(n,\cH)\leq \ex_j(n,\cG)$. So, in particular,
$\pi_j(\cH)\leq \pi_j(\cG)$. 
\end{proposition}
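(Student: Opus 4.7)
The plan is to observe that the hypothesis immediately implies an inclusion between the families of forbidden graphs, and then read off the inequalities for $\ex_j$ and $\pi_j$ from this inclusion.

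First I would unwind the definitions. Let $F$ be any $n$-vertex $k$-graph that is $\cH$-free. I claim $F$ is also $\cG$-free. Indeed, suppose for contradiction that $F$ contains a copy of some $G \in \cG$. By hypothesis, $G$ has a subgraph $H \in \cH$, and this $H$ is then also a subgraph of $F$, contradicting the $\cH$-freeness of $F$. Consequently, the collection of $n$-vertex $\cH$-free $k$-graphs is contained in the collection of $n$-vertex $\cG$-free $k$-graphs.

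Next I would take suprema. Since $\ex_j(n,\cH)$ is the maximum of $\delta_j(F)$ over $F$ ranging in the (smaller) class of $\cH$-free $n$-vertex $k$-graphs, while $\ex_j(n,\cG)$ is the maximum of the same quantity over the (larger) class of $\cG$-free $n$-vertex $k$-graphs, we immediately obtain $\ex_j(n,\cH)\le \ex_j(n,\cG)$. Dividing both sides by $\binom{n-j}{k-j}$ and letting $n\to \infty$, using the existence of the limits $\pi_j(\cH)$ and $\pi_j(\cG)$ cited earlier in the paper, yields $\pi_j(\cH)\le \pi_j(\cG)$.

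No step here is an obstacle; the whole argument is just chasing definitions, which is exactly why it is labeled a proposition rather than a theorem. The only thing worth being careful about is to state explicitly that ``$G\in \cG$ appears as a subgraph of $F$'' together with ``$H\in \cH$ is a subgraph of $G$'' implies ``$H$ is a subgraph of $F$'', which is transitivity of the subgraph relation on $k$-graphs.
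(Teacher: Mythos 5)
Your proof is correct and is exactly the routine definition-chasing the authors have in mind; the paper itself states this proposition without proof, calling it a ``trivial observation based on the definitions,'' and your argument (an $\cH$-free graph is $\cG$-free by transitivity of the subgraph relation, hence the maximum of $\delta_j$ over the smaller class is at most the maximum over the larger class, and the density inequality follows by dividing by $\binom{n-j}{k-j}$ and passing to the limit) is the standard way to spell it out.
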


We now define a family of $3$-graphs, called \emph{rings}, which will play a central role in our proof of Theorem \ref{main}.

\begin{definition}
Let $t\geq 2$ and let $V$ be a set of at most $2t$ vertices surjectively labeled with $x_0, y_0, x_1, y_1, \dots, x_{t-1}, y_{t-1}$.  Let $\cR^*_t$ be the family of $3$-graphs on $V$ with edge set $\bigcup_{i=0}^{t-1}\{x_i, y_i, x_{i+1}\}\cup \{x_i, y_i, y_{i+1}\}$, where addition is defined modulo $t$.  Let $R_t$ be the (unique) member of $\cR^*_t$ which has exactly $2t$ vertices and call $R_t$ a ring on $2t$ vertices.  Let $\cR^*_{\leq t}=\bigcup_{i=2}^t \cR_i^*$  and $\cR_{\leq t}=\{R_2, R_3, \dots, R_t\}$.
\end{definition}

\begin{lemma}\label{familyblowup}
For all positive integers $t\geq 2$ and $0\leq j\leq2$, we have
$\pi_j(\cR_{\leq t})=\pi_j(\cR^*_{\leq t})$ and $\pi_j(R_t)=\pi_j(\cR_t^*)$.
\end{lemma}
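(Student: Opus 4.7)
The plan is to reduce both equalities to a single embedding observation combined with the blowup lemma (Lemma \ref{blowup}). The ``easy'' directions, $\pi_j(\cR_t^*) \leq \pi_j(R_t)$ and $\pi_j(\cR_{\leq t}^*) \leq \pi_j(\cR_{\leq t})$, follow immediately from Proposition \ref{subgraph}, since $R_t \in \cR_t^*$ and $\cR_{\leq t} \subseteq \cR_{\leq t}^*$.

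The crux, and the step that demands the most care, is the reverse direction. Here the main obstacle is recognizing that instead of trying to embed each $H \in \cR_i^*$ into $R_i$ itself (which generally fails because identified labels want to live in incompatible clone classes of $R_i$), one should embed in the opposite direction: for every $i \in \{2, \dots, t\}$, every $H \in \cR_i^*$, and every $s \geq 2t$, one has $R_i \subseteq H(s)$. To verify this, let $\sigma : \{x_0, y_0, \dots, x_{i-1}, y_{i-1}\} \to V(H)$ be the surjective labeling defining $H$. For each label $\ell$, pick a clone $\phi(\ell)$ in the blowup class $V_{\sigma(\ell)}$ of $H(s)$, doing so injectively; this is possible because at most $2i \leq 2t \leq s$ labels share any single vertex of $H$. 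For each $k$, the edge $\{x_k, y_k, x_{k+1}\}$ of $R_i$ is mapped to $\{\phi(x_k), \phi(y_k), \phi(x_{k+1})\}$, three distinct clones lying in the classes of $\sigma(x_k), \sigma(y_k), \sigma(x_{k+1})$. These three labels yield a genuine edge of $H$ (the definition of $\cR_i^*$ implicitly requires that each of the $2i$ labeled triples remain a proper $3$-set, since otherwise $H$ would not be a well-defined $3$-graph), so the image is an edge of $H(s)$; the same check handles $\{x_k, y_k, y_{k+1}\}$.

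Given the embedding, each member of $\cR_t^*(s)$ contains $R_t$ (take $i=t$), and each member of $\cR_{\leq t}^*(s)$ contains some $R_i$ with $2 \leq i \leq t$, hence contains a member of $\cR_{\leq t}$. Applying Proposition \ref{subgraph} with these containment relations and then Lemma \ref{blowup} to discard the blowup gives
$$\pi_j(R_t) \leq \pi_j(\cR_t^*(s)) = \pi_j(\cR_t^*) \quad\text{and}\quad \pi_j(\cR_{\leq t}) \leq \pi_j(\cR_{\leq t}^*(s)) = \pi_j(\cR_{\leq t}^*),$$
which together with the ``easy'' directions completes the proof. No supersaturation or regularity machinery is required beyond the already-quoted blowup lemma.
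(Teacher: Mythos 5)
Your proof is correct and follows the same route as the paper: prove the easy inclusion via Proposition \ref{subgraph}, then observe that $R_i$ embeds into the blowup $H(s)$ of any $H\in\cR_i^*$ and discard the blowup via Lemma \ref{blowup}. The only cosmetic difference is that you take $s\geq 2t$ with the trivial bound that at most $2i$ labels share a vertex, while the paper uses the smaller blowup factor $s=t$ (one can check that at most $\lfloor i/2\rfloor\leq t$ labels can coincide, since any two coinciding labels must be non-coappearing and such labels form an independent set in the $i$-cycle of index pairs); either choice is sufficient since the blowup lemma holds for all $s$.
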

\begin{proof}
Since $\cR_{\leq t}\subseteq \cR^*_{\leq t}$, we have $\pi_j(\cR_{\leq t}^*)\leq \pi_j(\cR_{\leq t})$.
On the other hand, for every $i\leq t$,  $\cR_i^*(t)$ clearly contains a copy of $R_i$, since in any member of $\cR_i^*(t)$ 
there are $t$ distinct copies of $x_i,y_i$. By Proposition \ref{subgraph} and Lemma \ref{codegreeblowup},
$\pi_j(\cR_{\leq t})\leq \pi_j(\cR_{\leq t}^*(t))=\pi_j(\cR_{\leq t}^*)$.
Thus, $\pi_j(\cR_{\leq t})=\pi_j(\cR_{\leq t}^*)$. 

By a similar argument, we have $\pi_j(R_t)=\pi_j(\cR^*_t)$.
\end{proof}

\begin{definition}\label{lmproperty}
A hypergraph $H$ on $l$ vertices is said to have the {\it $(l,m)$-property} if every subset of $m$ vertices contains at least one edge of $H$.
\end{definition}

 Mubayi and R\"odl \cite {MR} recursively constructed for every $t\geq 2$
a family $\mathcal{F}_t$ of $3$-graphs with the $(2t+1,t+2)$-property. They showed that 
$\pi(\mathcal{F}_t)\leq \frac{1}{2}$ for each fixed $t\geq 2$ and used this to establish an upper bound on the Tur\'an density of $\{abc, ade, bde, cde\}$ (sometimes referred to as the $3$-book with $3$ pages). This family $\mathcal{F}_t$ also played a key role in Mubayi's proof of $\pi_2(\bF)=\frac{1}{2}$.
Here, we observe that for every $t$ the graph $R_t$ has the $(2t,t+1)$-property and we will also show
that $\pi_2(\cR_{\leq t})$ is small.  Then, by using $\cR_{\leq t}$ instead of $\cF_t$ 
we are able to establish $\ex_2(n,\bF)=\floor{\frac{n}{2}}$.

\begin{lemma}\label{edge}
$R_t$ has the $(2t,t+1)$-property. 
\end{lemma}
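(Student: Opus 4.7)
The plan is to prove the contrapositive: every independent set $I$ in $R_t$ has size at most $t$. Once that is established, any subset of $V(R_t)$ of size $t+1$ must contain an edge of $R_t$, which is exactly the $(2t,t+1)$-property.

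The key structural observation I would use is that, for each $i \in \{0,1,\dots,t-1\}$, the pair $\{x_i,y_i\}$ lies in precisely two edges of $R_t$, namely $\{x_i,y_i,x_{i+1}\}$ and $\{x_i,y_i,y_{i+1}\}$ (indices mod $t$). Consequently, an independent set $I$ cannot contain both $x_i$ and $y_i$ together with any vertex from the next pair $\{x_{i+1},y_{i+1}\}$. I would then encode an independent set cyclically by setting $a_i = |I \cap \{x_i,y_i\}| \in \{0,1,2\}$, so that $|I| = \sum_{i=0}^{t-1} a_i$ and the independence condition becomes the simple implication: if $a_i = 2$, then $a_{i+1}=0$.

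The bound then follows from elementary counting. Let $A$, $B$, $C$ denote the number of indices $i$ with $a_i = 2$, $1$, $0$ respectively, so that $A+B+C = t$. The implication above shows that the cyclic shift $i \mapsto i+1 \pmod t$ sends the $A$ indices of type ``$2$'' injectively into the $C$ indices of type ``$0$'', giving $A \leq C$. Therefore
\[
|I| \;=\; 2A + B \;=\; A + (A+B) \;\leq\; C + (A+B) \;=\; t,
\]
as desired. I do not anticipate any real obstacle here: the argument is essentially forced by the rigid pair-and-cycle structure of $R_t$, and no blowup or supersaturation machinery is needed for this particular lemma.
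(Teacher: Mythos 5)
Your proof is correct and follows essentially the same approach as the paper: both observe that if $x_i,y_i\in S$ then $x_{i+1},y_{i+1}\notin S$, and conclude $|S|\leq t$. The only difference is that the paper simply asserts ``this implies $|S|\leq t$,'' whereas you make the final counting explicit with the injection $A\leq C$ — a reasonable bit of added rigor, but not a different route.
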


\begin{proof}
Clearly $R_t$ has $2t$ vertices.
Let $S$ be any set of vertices in $R_t$ that contains no edge. We show that $|S|\leq t$.  For each $i\in I=\{0,1,\dots, t-1\}$, if $x_i, y_i\in S$ then $x_{i+1},y_{i+1}\notin S$ (addition modulo $t$) otherwise we would have an edge.  This implies $|S|\leq t$. 
\end{proof}

Next, we show that $\pi_2(\cR_{\leq t})$ is small by using an auxiliary directed graph. First we recall some old results concerning
short directed cycles in directed graphs. As usual,
for a directed graph $D$, let $\delta^+(D)$ and $\delta^-(D)$ denote the minimum out-degree and in-degree of $D$ respectively.  Caccetta and H\"aggkvist \cite{CH} conjectured that if $D$ is a directed graph on $n$ vertices with $\delta^+(D)\geq r$, then $D$ contains a cycle of length at most $\ceiling{\frac{n}{r}}$.  While their conjecture remains open, Chv\'atal and Szemer\'edi \cite{CS} gave a simple proof of a slightly weaker statement.

\begin{theorem}[Chv\'atal-Szemer\'edi]\label{ChSz}
Let $D$ be a directed graph on $n$ vertices.  If $\delta^+(D)\geq r$ (or $\delta^-(D)\geq r$), then $D$ contains a directed cycle of length at most $\frac{2n}{r+1}$.
\end{theorem}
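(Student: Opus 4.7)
By symmetry (reverse every arc of $D$ if necessary), we may assume $\delta^+(D)\geq r$. My plan is to use a breadth-first search from a well-chosen root, combined with a careful counting of admissible back-arcs. Pick a vertex $v$ lying on a shortest directed cycle of $D$, so that every layer $L_0=\{v\}, L_1, \ldots, L_{g-1}$ of the out-BFS from $v$ is nonempty (where $g$ denotes the girth). Fix a BFS tree $\mathcal{T}$ rooted at $v$. The central observation is that any arc $(u,w)$ of $D$ with $u\in L_i$ and $w$ an ancestor of $u$ in $\mathcal{T}$ at level $j$ produces a directed cycle of length exactly $i-j+1$ (by concatenating the tree path $w\to \cdots \to u$ with the arc $u\to w$). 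Consequently, such back-arcs are forbidden whenever $i-j+1<g$, i.e., whenever $j>i-g+1$.

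Combining this forbidden-target count with $|N^+(u)|\geq r$ for every $u$ and bookkeeping the forbidden slots per vertex (the forbidden ancestor in each of the layers $L_{i-g+2},\ldots,L_{i-1}$, plus the $L_i$ slot for $u$ itself), one obtains a first-pass layer-growth bound $|L_{\leq i+1}|\geq r+\min(i+1, g-1)$. The final step is to sharpen this to reach the target $g\leq \frac{2n}{r+1}$. One natural route is to couple the out-BFS with an in-BFS around $v$ and note that the balls $B^+_t(v)$ and $B^-_t(v)$ intersect only in $\{v\}$ whenever $2t<g$: a common vertex $u\neq v$ would yield a closed walk of length at most $2t<g$, containing a directed cycle shorter than the girth. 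Applying the symmetric layer-growth analysis to $B^-_t(v)$ (in the reversed digraph, after picking $v$ to additionally satisfy $d^-(v)\geq r$, which exists since $\sum_u d^-(u)=\sum_u d^+(u)\geq rn$) and using $|B^+_t(v)|+|B^-_t(v)|\leq n+1$, one aims for the desired inequality after choosing $t=\lfloor (g-1)/2 \rfloor$.

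The main obstacle is precisely the gap between the first-pass bound $|L_{\leq t}|\geq r+t$ and the sharper bound needed to reach $\frac{2n}{r+1}$. Unlike the undirected case (the Moore bound), where BFS layers can be shown to grow multiplicatively, a back-arc in a directed BFS to a non-ancestor does not close any cycle, so the naive counting only gives additive growth and yields the weaker $g\leq n-r+1$. To recover the correct slope $\tfrac{1}{2}(r+1)$ in the layer count — equivalently, the factor of $2$ in the denominator $r+1$ — I expect one must run the layer-growth count from many roots simultaneously (for instance from every vertex of the girth cycle) and average, or exploit the interplay of in- and out-balls around a single root $v$ more delicately than the crude disjointness inequality above. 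This is the step where the directed nature of the problem makes the argument genuinely more subtle than its undirected analogue, and it is where I would expect to spend most of the effort.
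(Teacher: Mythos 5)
This statement is quoted from Chv\'atal and Szemer\'edi \cite{CS}; the paper does not prove it, so there is no in-paper argument to compare against. Your proposal is, by your own admission, not a complete proof: the additive layer-growth estimate $|L_{\leq i+1}|\geq r+\min(i+1,g-1)$ (which is correct) yields only $g\leq n-r+1$, and coupling it with the disjointness of $B^+_t(v)$ and $B^-_t(v)$ improves this only to about $g\leq n-2r+O(1)$; both are far weaker than $2n/(r+1)$ when $r$ is small relative to $n$. There is also a concrete error in the in-ball step, independent of the acknowledged gap: running the layer-growth analysis on $B^-_t(v)$ requires $d^-(u)\geq r$ for \emph{every} vertex $u$ encountered by the in-BFS, whereas the hypothesis supplies only $\delta^+(D)\geq r$. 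Choosing the root $v$ with $d^-(v)\geq r$ controls the first in-layer and nothing further, so the in-ball need not grow at all. The device that produces slope $(r+1)/2$ rather than slope $1$ in the ball count is exactly the content of the theorem, and neither the in/out-ball coupling nor the averaging-over-roots speculation, as you present them, supplies it; you would need a genuinely different idea than BFS-layer counting from a single root, as in \cite{CS}.
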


There have been improvements on this result. However, Theorem \ref{ChSz} suffices for our purposes.

\begin{theorem} \label{ring-codegree}
For all $t\geq 2$ we have $\pi_2(\cR_{\leq t})\leq \frac{\sqrt{2}}{\sqrt{t}}$.
\end{theorem}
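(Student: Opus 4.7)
The plan is to translate the problem into a directed cycle question and apply Chv\'atal--Szemer\'edi. By Lemma \ref{familyblowup} it suffices to prove $\pi_2(\cR^*_{\leq t})\leq \sqrt{2/t}$, so fix $\eps>0$, set $c=\sqrt{2/t}+\eps$, and assume $G$ is an $n$-vertex $3$-graph with $\delta_2(G)\geq c(n-2)$ (with $n$ large relative to $\eps$ and $t$); the goal is to exhibit a copy of some member of $\cR^*_{\leq t}$ in $G$.

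Introduce the auxiliary digraph $D$ on vertex set $\binom{V(G)}{2}$, placing an arc from a pair $P$ to a pair $Q$ whenever $Q\subseteq N(P)$. Note that $D$ is loopless since $P\cap N(P)=\emptyset$. The key observation is that a directed cycle $P_0\to P_1\to \cdots\to P_{i-1}\to P_0$ in $D$, with $P_j=\{x_j,y_j\}$, is \emph{precisely} a copy of a member of $\cR_i^*$: the arc condition $\{x_{j+1},y_{j+1}\}\subseteq N(x_j,y_j)$ delivers the two ring edges $\{x_j,y_j,x_{j+1}\}$ and $\{x_j,y_j,y_{j+1}\}$ indexed modulo $i$. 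The pairs $P_j$ are distinct as vertices of $D$ but may share vertices of $G$, which is why the starred family $\cR^*_t$ (allowing fewer than $2t$ vertices) is the right target; no extra disjointness bookkeeping is then needed.

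Each pair $P$ has out-degree at least $r:=\binom{c(n-2)}{2}$ in $D$, since $|N(P)|\geq c(n-2)$. Applying Theorem \ref{ChSz} to $D$ on $\binom{n}{2}$ vertices produces a directed cycle of length at most
$$\frac{2\binom{n}{2}}{r+1}=\frac{n(n-1)}{\binom{c(n-2)}{2}+1}.$$
A routine estimate shows this tends to $2/c^2<t$ as $n\to\infty$, so for $n$ large the cycle length lies in $\{2,\ldots,t\}$, yielding the desired member of $\cR^*_{\leq t}$. The only conceptual step in the whole argument is setting up the auxiliary digraph on \emph{pairs} (rather than on vertices) of $G$; once that reduction is in place, Chv\'atal--Szemer\'edi does all the work, and no supersaturation, regularity, or Tur\'an-type density estimate is required for this lemma.
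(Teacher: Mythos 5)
Your proof is correct and follows essentially the same path as the paper: reduce to $\cR^*_{\leq t}$ via Lemma \ref{familyblowup}, build the auxiliary digraph on $\binom{V(G)}{2}$ with arcs $P\to Q$ when $Q\subseteq N(P)$, bound the out-degree by $\binom{\delta_2(G)}{2}$, and invoke Chv\'atal--Szemer\'edi to extract a short directed cycle, which translates back to a member of $\cR^*_{\leq t}$. The minor differences (normalizing the co-degree by $n-2$ rather than $n$, and taking the limit of the cycle-length bound rather than a clean chain of inequalities to $2N/t$) are cosmetic.
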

\begin{proof}
By Lemma \ref{familyblowup}, it suffices to prove that $\pi_2(\cR^*_{\leq t})\leq \frac{\sqrt{2}}{\sqrt{t}}$. Let $a=\frac{\sqrt{2}}{\sqrt{t}}$. Let $\epsilon$ be a small positive real and let $b=a+\epsilon$. Let $n$ be
sufficiently large as a function of $\epsilon$.
Let $G$ be a $3$-graph on $n$ vertices with $\delta_2(G)\geq \ceiling{bn}$. 

Let $D$ be an auxiliary digraph with vertex set $\binom{V(G)}{2}$ such that $(\{u,v\}, \{u',v'\})$ is an edge of $D$ if and only if $uvu'$ and $uvv'$ are edges of $G$ (in other words, if and only if $u',v'\in N_G(u,v)$).  Let $N=\binom{n}{2}$.  Then $D$ has $N$ vertices. 
 For any $\{u,v\}\in V(D)$, its out-neighbors in $D$ are precisely all the $2$-subsets of  $N_G(u,v)$ and thus  (using $n$ being sufficiently large)
\begin{equation*}
\delta^+(D)\geq \binom{\ceiling{bn}}{2}\geq\frac{bn(bn-1)}{2}\geq \frac{a^2 n^2}{2}\geq a^2\binom{n}{2}= \frac{2N}{t}.
\end{equation*}

By Theorem \ref{ChSz}, $D$ contains a directed cycle $C$ of length at most $\frac{2N}{2N/t+1}\leq t$.
The subgraph of $G$ corresponding to $C$ is a member of $\cR^*_{\leq t}$.
\end{proof}

\section{The co-degree threshold for the Fano plane} \label{fano}

Let $\bF^*$ be the $3$-graph obtained from the complete $3$-partite $3$-graph with vertex set $\{x,x',y,y',z,z'\}$ by adding the vertex $u$ and the three edges $uxx',uyy',uzz'$.  Notice that $\bF\subseteq \bF^*$.  We obtain Theorem \ref{main} as a corollary of the following more general theorem.

\begin{theorem}\label{F*}
For sufficiently large $n$, $\ex_2(n, F)=\floor{\frac{n}{2}}$ for all $\bF\subseteq F\subseteq \bF^*$.
\end{theorem}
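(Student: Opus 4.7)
The theorem has two parts. For the lower bound $\ex_2(n,F)\geq \floor{n/2}$, I note that $B(n)$ is $2$-colorable while $\bF$ is not, so $B(n)$ contains no copy of $\bF$ and hence no copy of any $F\supseteq \bF$; this gives $\ex_2(n,F)\geq \delta_2(B(n))=\floor{n/2}$. For the upper bound, $F\subseteq \bF^*$ implies that any $3$-graph containing $\bF^*$ also contains $F$, so $\ex_2(n,F)\leq \ex_2(n,\bF^*)$, and it suffices to prove $\ex_2(n,\bF^*)\leq \floor{n/2}$. Accordingly, fix $n$ large, let $G$ be an $n$-vertex $3$-graph with $\delta_2(G)\geq \floor{n/2}+1$, and aim to find a copy of $\bF^*$ in $G$.

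The plan is to invoke Theorem~\ref{ring-codegree}. Choose $t$ with $\sqrt{2/t}<1/2$, e.g., $t=9$, so that $\pi_2(\cR_{\leq t})<1/2$. By a standard supersaturation argument using Lemmas~\ref{codegreeblowup} and~\ref{familyblowup}, for $n$ large enough $G$ contains a copy of some blowup $R_s(K)$ with $2\leq s\leq t$ and $K$ an arbitrarily large constant. Write the pair-classes of this blowup as $(X_i,Y_i)_{i=0}^{s-1}$, each of size $K$; by construction, for every $i$, every triple with one vertex in $X_i$, one in $Y_i$, and one in $X_{i+1}\cup Y_{i+1}$ (indices mod $s$) is an edge of $G$.

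Consequently, for any distinct $x,x'\in X_0$, distinct $y,y'\in Y_0$, $z\in X_1$, and $z'\in Y_1$, the six vertices $\{x,x',y,y',z,z'\}$ span a complete $3$-partite $3$-graph in $G$ with parts $\{x,x'\},\{y,y'\},\{z,z'\}$: each of the $8$ crossing triples is one of the blowup edges listed above. To finish constructing $\bF^*$, I need a vertex $u$ outside these six vertices with $uxx',uyy',uzz'\in E(G)$---equivalently, $u\in N_G(x,x')\cap N_G(y,y')\cap N_G(z,z')$. Each of these three co-neighborhoods has size $\geq \floor{n/2}+1$.

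The main obstacle is that three sets of size just above $n/2$ in an $n$-vertex universe need not have a common element. The strategy is to exploit the $\Theta(K^6)$ freedom in choosing the six pair-vertices from the blowup. Let $T$ be the number of $7$-tuples $(u,x,x',y,y',z,z')$ satisfying all the required edge conditions. Writing $T=\sum_{u\in V(G)}f(u)g(u)h(u)$, where $f(u), g(u), h(u)$ respectively count the qualifying pairs at $u$ among $\binom{X_0}{2}$, $\binom{Y_0}{2}$, and $X_1\times Y_1$, the co-degree hypothesis easily gives $\sum_u f,\sum_u g,\sum_u h=\Omega(K^2 n)$. The final step---establishing $T>0$---is the crux of the proof; I expect it to follow either via a positive-correlation estimate leveraging the structural symmetry of the blowup, or, equivalently, by a stability argument ruling out the worst-case configuration in which the three co-neighborhood families have nearly disjoint supports (such a configuration would force a bipartite-like structure on $G$ incompatible with the existence of the blowup $R_s(K)$).
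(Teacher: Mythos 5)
Your proposal correctly reduces to bounding $\ex_2(n,\bF^*)$ and correctly invokes Theorem~\ref{ring-codegree} (via Lemma~\ref{codegreeblowup}) to produce a blowup of a ring $R_s$ inside $G$. But the argument then stalls exactly at the crucial step, which you acknowledge: you fix one particular triple of pair-classes, try to find a common co-neighbor of three pairs lying in three sets of size $\approx n/2$ each, and hope that the freedom to vary the six vertices will save you via some correlation or stability estimate. This is the wrong order of quantifiers, and it is precisely the difficulty the ring construction was designed to circumvent.

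The paper's proof uses only $R_t(2)$ (double blowup, no large $K$ needed) and exploits Lemma~\ref{edge}, the $(2t,t+1)$-property of $R_t$, which your argument never uses. For each of the $2t$ vertices $v$ of $R_t$, let $v'$ be its clone in the copy of $R_t(2)\subseteq G$ and set $C_v=N_G(v,v')$. Each $|C_v|\geq\floor{n/2}+1$, so $\sum_{v}|C_v|\geq 2t(\floor{n/2}+1)>tn$; by pigeonhole some $u^*$ lies in more than $t$ of the $2t$ sets $C_v$. Now, because any $t+1$ vertices of $R_t$ contain an edge, among the $v$'s with $u^*\in C_v$ there is an edge $xyz$ of $R_t$, and $\{x,x',y,y',z,z',u^*\}$ spans a copy of $\bF^*$. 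The key idea is to choose the vertex $u^*$ first (via summing co-degrees over all $2t$ vertices, where the exact bound $\floor{n/2}+1$ is what makes the average exceed $t$) and only then extract the edge $xyz$, using the independence-number bound built into the ring. Without that reordering and without the $(2t,t+1)$-property, the "positive-correlation estimate" or "stability argument" you sketch would need substantial new ideas and likely would not succeed, since three nearly-half-sized co-neighborhoods genuinely can be pairwise almost disjoint; you need the combinatorial structure of $R_t$ to rule this out, not a density argument.
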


\begin{proof}
In the introduction we pointed out that $B(n)$ gives the lower bound $\ex_2(n,\bF)\geq \floor{\frac{n}{2}}$, thus it suffices to prove $\ex_2(n, \bF^*)\leq \floor{\frac{n}{2}}$.  

By Theorem \ref{ring-codegree} and Lemma \ref{codegreeblowup}, $\pi_2(\cR_{\leq 9}(2))\leq \frac{\sqrt{2}}{3}<\frac{1}{2}$.  Let $n$ be large enough such that $\ex_2(n, \cR_{\leq 9}(2))<\floor{\frac{n}{2}}$.
Let $G$ be a graph on $n$ vertices with $\delta_2(G)\geq \floor{\frac{n}{2}}+1$.  
Then $G$ contains a copy $H$ of $R_t(2)$ for some $t\leq 9$.

  For each vertex $v$ in $R_t$, let $v'$ denote the clone of $v$ in $R_t(2)$.  For all $v\in R_t$, let $C_v=\{u\in V(G): u\in N(v,v')\}$.  Summing over all $v\in R_t$ and using the exact condition $\delta_2(G)\geq \floor{\frac{n}{2}}+1$ (the only place where the exact condition is needed), gives 
\begin{equation}\label{exact}
\sum_{v\in R_t} |C_v|\geq 2t\left(\floor{\frac{n}{2}}+1\right)\geq 2t\left(\frac{n+1}{2}\right)>tn.  
\end{equation}
This implies that there exists some $u^*\in V(G)$ which is contained in more than $t$ different sets $C_v$.  Therefore, by Lemma \ref{edge}, there are vertices $x,y,z\in R_t$ such that $xyz$ is an edge in $R_t$ with $u^*\in C_x$, $u^*\in C_y$ and $u^*\in C_z$.  So in $R_t(2)$, $S:=\{x,x',y,y',z,z'\}$ induces a complete $3$-partite $3$-graph and thus $S\cup \{u^*\}\cong F^*$ (see Figure \ref{Fplus}).

\begin{figure}[h]
\begin{center}
\scalebox{.75}{\input{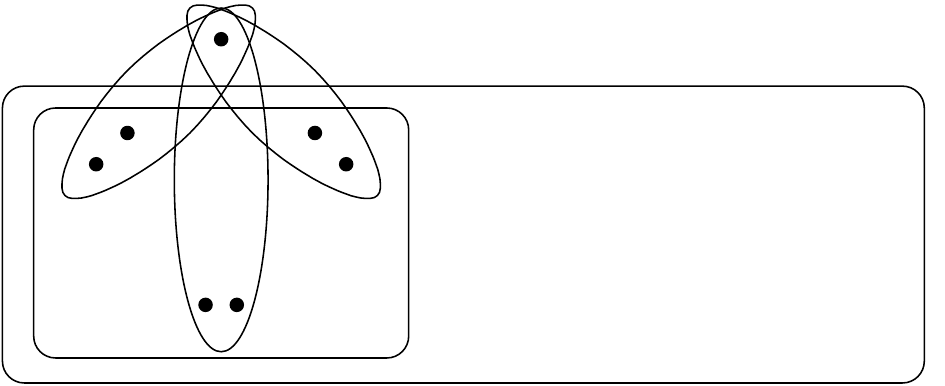_t}}
\caption{Obtaining the subgraph $F^*$}
\label{Fplus}
\end{center}
\end{figure}

\end{proof}

\section{Tur\'an density of rings}\label{turan}

Let $\cR$ denote the family $\cup_{i\geq 2} \{R_i\}$.  
The fact that $R_t$ has the $(2t, t+1)$-property and the family $\cR_{\leq t}$ has small
co-degree density was key to our short proof of Mubayi's conjecture.
Conceivably, the family $\cR_{\leq t}$ can be useful elsewhere in the study of the Tur\'an
problem for $3$-graphs. For instance, if $\cR_{\leq t}$ also has relatively small Tur\'an density,
then it could potentially be used in bounding the Tur\'an densities of other $3$-graphs, just like
how $\cF_t$ was used by Mubayi and R\"odl \cite{MR}. In this section, we show that
similar to $\cF_t$ the family $\cR$ also has Tur\'an density at most $\frac{1}{2}$.
In fact, we will show that the Tur\'an density of $\mathcal{R}$ is exactly $\frac{1}{2}$.
The family $\cR$ does, however, have some advantages over $\cF_t$.
One,  it has the $(2t,t+1)$-property versus $\cF_t$ having the $(2t+1, t+2)$-property.
Two, the structure of $R_t$ is simple and explicit, while in forcing a member of $\cF_t$, we
do not quite know which particular structure that member has.

Next, we show that $\cR$ has Tur\'an density at least $\frac{1}{2}$ via a
construction inspired by the ``half-graph'' constructions from bandwidth problems.

\begin{example} \label{ring-lower-construction}
Let $A=\{a_1, a_2,\dots, a_{\floor{n/2}}\}$ and $B=\{b_1, b_2, \dots, b_{\ceiling{n/2}}\}$. 
Let $G_n$ be a $3$-graph on $A\cup B$ whose edges are all the triples of the form $\{a_i, b_j, a_k\}$ and 
$\{a_i, b_j, b_k\}$ where $i,j<k$.   
\end{example}

It is easy to check that $\lim_{n\to \infty} e(G_n)/\binom{n}{3}=\frac{1}{2}$.

\begin{proposition}\label{ring-lower}
For all $n$ the graph $G_n$ given in Example \ref{ring-lower-construction} contains no member of $\mathcal{R}$
and hence $\pi(\mathcal{R})\geq \frac{1}{2}$.
\end{proposition}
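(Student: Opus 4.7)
The plan is to equip each vertex of $G_n$ with a \emph{rank} (both $a_i$ and $b_i$ receive rank $i$) and a \emph{type} ($a_i$ has type $A$, $b_i$ has type $B$). The decisive structural property of $G_n$, read directly off the definition, is that every edge has a unique vertex of strictly largest rank (the \emph{top}) and two remaining vertices (the \emph{bottom pair}) of which exactly one has type $A$ and one has type $B$. This is just an encoding of the conditions $i,j<k$ and $\{a_i,b_j,a_k\}$ or $\{a_i,b_j,b_k\}$.

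Assume for contradiction that $R_t\subseteq G_n$ for some $t\ge 2$ via an embedding $\phi$. Among all $2t$ images $\phi(x_i),\phi(y_i)$ pick one of maximum rank; by the $x$/$y$ symmetry of $R_t$ we may assume it is $\phi(x_j)$ for some $j$. In each of the three ring edges containing $x_j$, namely $\{x_{j-1},y_{j-1},x_j\}$, $\{x_j,y_j,x_{j+1}\}$ and $\{x_j,y_j,y_{j+1}\}$, the other two ring vertices must have strictly smaller rank than $\phi(x_j)$ (otherwise two equal-rank vertices would share the top slot of an edge of $G_n$, contradicting uniqueness of the top). Hence $\phi(x_j)$ is the top of all three such edges, and the bottom-pair rule yields (a)~$\phi(x_{j-1})$ and $\phi(y_{j-1})$ have different types, and (b)~each of $\phi(x_{j+1})$ and $\phi(y_{j+1})$ has type opposite to $\phi(y_j)$, so $\phi(x_{j+1})$ and $\phi(y_{j+1})$ share a common type.

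The engine of the proof is a one-step propagation lemma: \emph{if $\phi(x_k)$ and $\phi(y_k)$ share a common type $T$, then $\phi(x_{k+1})$ and $\phi(y_{k+1})$ both have the opposite type $\bar T$}. Indeed, in each of $\{x_k,y_k,x_{k+1}\}$ and $\{x_k,y_k,y_{k+1}\}$, two of the three vertices already carry type $T$; since the bottom pair must contain one vertex of each type, the top must be one of $\phi(x_k)$ or $\phi(y_k)$, and the remaining vertex is then forced to carry type $\bar T$. Iterating from (b) propagates ``same type'' forward around the cycle through positions $j+1,j+2,\dots,j+t-1\equiv j-1\pmod{t}$; in particular $\phi(x_{j-1})$ and $\phi(y_{j-1})$ must share a common type, contradicting (a). This shows $G_n$ contains no ring, and combined with the elementary count $e(G_n)/\binom{n}{3}\to 1/2$ noted just before the proposition, it gives $\pi(\mathcal{R})\ge 1/2$. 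The one delicate point is making sure the propagation wraps properly around the cycle, but since (a) demands ``different types'' while propagation delivers ``same type,'' the parity of $t$ is irrelevant and the contradiction is clean.
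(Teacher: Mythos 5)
Your proof is correct, and it takes a genuinely different route from the paper's. The paper first records that any pair $\{a_i,a_j\}$ with $i<j$ has co-neighbors only among $\{b_1,\dots,b_{j-1}\}$ (and symmetrically for $b$-pairs), then splits into two cases according to whether the embedded pairs $(x'_i,y'_i)$ lie in the same part of the bipartition or straddle it; in each case it shows that $\max\{\iota(x'_i),\iota(y'_i)\}$ is strictly monotone around the cycle, an impossibility. You instead anchor at the image of globally maximum rank, observe that it must be the unique ``top'' of all three ring edges through it, deduce that the pair behind it must straddle the bipartition while the pair ahead of it must lie on one side, and then close the loop with a type-propagation lemma that uses only the ``one $A$, one $B$ in the bottom pair'' structure and never consults ranks again. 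Both arguments exploit the same two structural features of $G_n$ (each edge has a unique vertex of strictly largest index, and the other two straddle the bipartition), but yours replaces the paper's monotone numerical invariant, which must be carried around the whole cycle, with a single appeal to the maximum rank followed by a clean parity argument on types, avoiding the explicit case split. One presentational point: the parenthetical remark about ``two equal-rank vertices sharing the top slot'' is a bit compressed. It would be clearer to say that if some other image in a ring edge through $x_j$ had rank equal to that of $\phi(x_j)$, then no vertex of that edge could have strictly larger rank than both, so the edge would have no top, contradicting that every edge of $G_n$ has one.
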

\begin{proof}
Observe first that, based on the definition of $G_n$, for any $i,j$ with $i<j$, the pair $\{a_i, a_j\}$ has no co-neighbor in $\{b_j, b_{j+1}, \ldots, b_{\ceiling{n/2}}\}$
and the pair $\{b_i, b_j\}$ has no co-neighbor in $\{a_j, a_{j+1},\ldots, a_{\floor{n/2}}\}$.
Suppose for a contradiction that $G$ contains a copy $H$ of $R_t$, for some $t$. Suppose $V(R_t)=
\{x_0, y_0, x_1, y_1,$ 
$\dots, x_{t-1}, y_{t-1}\}$ and $E(R_t)=\bigcup_{i=0}^{t-1}\{ x_iy_ix_{i+1}, x_iy_iy_{i+1}\}$.
 For each $v$ in $R_t$, let $v'$ denote its image in $G_n$ under
a fixed isomorphism from $R_t$ to $H$. 
 For any $w$ in $A$ (or $B$), let $\iota(w)$ denote its subscript in $A$ (or $B$). In other words,
if $w=a_\ell$, then $\iota(w)=\ell$.
There are two cases to consider.

\medskip

{\it Case 1.} For some $i\in \{0,\ldots, t-1\}$, $x'_i$ and $y'_i$ are in the same set.

\medskip

Without loss of generality, we may assume that $i=0$ and that $x'_0, y'_0$ are both in $A$.
Then $x'_1, y'_1$ must both be in $B$. Furthermore,
by the observation we made at the beginning of this proof, $\max\{\iota(x'_1), \iota(y'_1)\}<
\max\{\iota(x'_0),  \iota(y'_0)\}$. By repeating this argument, we get 
 $\max\{\iota(x_0),\iota(y_0)\}<\max\{\iota(x_{t-1}),\iota(y_{t-1})\}<\cdots<\max\{\iota(x_0),\iota(y_0)\}$,
which is a contradiction.

\medskip

{\it Case 2.} For all $i\in \{0,\ldots, t-1\}$, $x'_i$ and $y'_i$ are in different sets. 

\medskip

By the symmetry of $R_t$, we  may assume that all the $x'_i$'s are in $A$ and all
the $y'_i$'s are in $B$. Based on the observation we made at the beginning of the proof,
we now must have  $\max\{\iota(x'_i),\iota(y'_i)\}<\max\{\iota(x'_{i+1}),\iota(y'_{i+1})\}$ for all $0\leq i\leq t-1$ (with addition defined modulo $t$). This leads to a contradiction like in Case 1.

\end{proof}

We now prove the main result of this section. This follows immediately from the following lemma.  Given a $3$-graph $G$ and a vertex $x$, the
{\it link graph} $L(x)$ of $x$ is a $2$-graph whose edges are all the pairs $ab$ such that
$xab\in E(G)$.

\begin{lemma} \label{ring-upper}
$\pi(\cR_{\leq t})\leq \frac{1}{2}+\frac{1}{t-1}$.
\end{lemma}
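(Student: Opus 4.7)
By Lemma \ref{familyblowup} we have $\pi(\cR_{\leq t}) = \pi(\cR^*_{\leq t})$, so it suffices to find a copy of some $R^*_k$ ($k \leq t$) in an $n$-vertex $3$-graph $G$ with $e(G) > (\tfrac{1}{2} + \tfrac{1}{t-1} + \varepsilon)\binom{n}{3}$. I plan to use the pair-digraph $D$ of Theorem \ref{ring-codegree}: $V(D) = \binom{V(G)}{2}$ with an arc $(P,Q)$ whenever $Q \subseteq N_G(P)$. Since $N_G(P) \subseteq V(G) \setminus P$, every arc in $D$ goes between vertex-disjoint pairs, so a directed cycle $P_0 \to P_1 \to \cdots \to P_{k-1} \to P_0$ in $D$ automatically satisfies the ``consecutive pairs are disjoint'' requirement needed to produce a valid $x_iy_i$-labeling of a copy of $R^*_k$ in $G$. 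Hence it is enough to find a directed cycle of length at most $t$ in $D$.

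Let $\bar d := 3e(G)/\binom{n}{2} > (\tfrac12+\tfrac{1}{t-1}+\varepsilon)(n-2)$. Since the out-degree of $P \in V(D)$ is $\binom{d(P)}{2}$, Jensen's inequality yields average out-degree at least $\binom{\bar d}{2}$, which is straightforward to check exceeds $\tfrac{2}{t}\binom{n}{2}$ for all $t \geq 2$ --- precisely the min-out-degree threshold of Chv\'atal--Szemer\'edi (Theorem \ref{ChSz}) that would force a directed cycle of length at most $t$ in $D$. So once we upgrade ``average out-degree in $D$'' to ``minimum out-degree on a large induced sub-digraph of $D$'', Chv\'atal--Szemer\'edi will finish the proof.

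The main obstacle is exactly this average-to-minimum upgrade, which for undirected graphs is automatic (iteratively pruning low-degree vertices) but can fail for digraphs because deleting a vertex also removes in-arcs to it. My plan is to navigate this using the link-graph structure of $G$: by Proposition \ref{min-degree} I can first pass to a sub-$3$-graph $G'$ on $m = \Omega(n)$ vertices with $\delta_1(G') > (\tfrac{1}{2}+\tfrac{1}{t-1}+\tfrac{\varepsilon}{2})\binom{m-1}{2}$, so that every link $L_{G'}(v)$ is a dense $2$-graph. Then, choosing $x$ of maximum degree and $y$ maximizing $d_{L(x)}(y)$ gives a pair $\{x,y\}$ whose co-neighborhood $W := N_{G'}(\{x,y\})$ has linear size and for which the restricted $2$-graphs $L_{G'}(x)[W]$, $L_{G'}(y)[W]$ retain positive density. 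An even cycle $c_0 c_1 \dots c_{k-1}$ of length $k \leq t$ lying inside $(L_{G'}(x) \cap L_{G'}(y))[W]$ then yields a copy of $R^*_k$ via $x_i = c_i$ with $y_i$ alternating between $x$ and $y$: the ``side'' edges $\{x_i, y_i, x_{i+1}\}$ are the cycle edges (which lie in both $L(x)$ and $L(y)$) and the ``diagonal'' edges $\{x_i, y_i, y_{i+1}\}$ all reduce to a triple $\{x, y, c_i\}$, which is in $E(G')$ since $c_i \in W$. The hard part will be the precise density bookkeeping so that the forced cycle has length at most $t$ rather than merely some constant; the slack $\tfrac{1}{t-1}$ over the lower bound $\tfrac{1}{2}$ coming from Example \ref{ring-lower-construction} must be spent on a Moore-bound--style estimate tying the cycle length to $t$.
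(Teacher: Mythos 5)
You have all the right ingredients but miss how they snap together, and the alternative route you sketch to fill the gap is left genuinely unfinished. You correctly observe that the out-degree of $\{u,v\}$ in $D$ is $\binom{d_G(u,v)}{2}$, which has no useful lower bound (it can be $0$), and you correctly note that one can reduce to a minimum $1$-degree condition $\delta_1(G) \geq (\tfrac12 + \tfrac1t)\binom{n}{2}$ via Proposition \ref{min-degree}. What you miss is that these two facts already close the proof once you switch to \emph{in}-degree: the in-neighbours of $\{u,v\}$ in $D$ are precisely the pairs $\{u',v'\}$ with $u'v' \in E(L(u)) \cap E(L(v))$, so $d^-_D(\{u,v\}) = |E(L(u)) \cap E(L(v))|$, and by inclusion–exclusion on the two dense link graphs this is at least $2\bigl(\tfrac12 + \tfrac1t\bigr)\binom{n}{2} - \binom{n}{2} = \tfrac{2}{t}N$ for \emph{every} pair. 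Theorem \ref{ChSz} is stated for minimum in-degree as well as out-degree, so it applies directly to give a directed cycle of length at most $t$, and you are done. The average-to-minimum upgrade you worry about is a non-issue because the minimum in-degree is bounded from the start.

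Your proposed detour through a single dense pair $\{x,y\}$, the window $W = N(x,y)$, and short even cycles in $(L(x)\cap L(y))[W]$ is both more complicated and incomplete. You never establish a density lower bound for the intersection link graph restricted to $W$ (maximizing $d(x)$ and then $d_{L(x)}(y)$ controls $L(x)[W]$ but says nothing about $L(y)[W]$, let alone the intersection), and your final remark that the slack $\tfrac{1}{t-1}$ ``must be spent on a Moore-bound-style estimate tying the cycle length to $t$'' is precisely the step you do not carry out — and it is also somewhat misdirected, since for fixed $t$ and $n\to\infty$ any positive density already forces a $C_4$ by Moore-type bounds, so the cycle length would not be the bottleneck; the issue is whether the restricted intersection has any positive density at all. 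As written, the proposal does not constitute a proof.
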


\begin{proof}
By Proposition
\ref{min-degree} and Lemma \ref{familyblowup}, it suffices to prove
$\pi_1(\cR^*_{\leq t})\leq \frac{1}{2}+\frac{1}{t-1}$.
Let $n$ be sufficiently large as a function of $t$.  
Let $G$ be a $3$-graph on $n$ vertices with $\delta_1(G)\geq (\frac{1}{2}+\frac{1}{t-1}) \binom{n-1}{2}\geq (\frac{1}{2}+\frac{1}{t})\binom{n}{2}$. We prove that $G$ contains a member of $\cR^*_{\leq t}$. 
Create an auxiliary digraph with vertex set $\binom{V(G)}{2}$ (all $2$-subsets of $V(G)$) where $(\{u,v\}, \{u',v'\})$ is an edge of $D$ if and only if $uvu'$ and $uvv'$ are edges of $G$ (in other words, if and only if $uv$ is in the link graph of both $u'$ and $v'$).  Let $N=\binom{n}{2}$. 

Let $\{u,v\}$ be a vertex in $D$.  Since $\delta_1(G)\geq (\frac{1}{2}+\frac{1}{t})\binom{n}{2}$, the link graph of $u$ has at least $(\frac{1}{2}+\frac{1}{t})\binom{n}{2}$ edges and the link graph of $v$ has 
at least $(\frac{1}{2}+\frac{1}{t})\binom{n}{2}$ edges.  Therefore there are at least $\frac{2}{t} \binom{n}{2}$ edges in the intersection of their link graphs, which implies $\delta^-(D)\geq \frac{2}{t} N$.  So we can apply Theorem \ref{ChSz} to the directed graph $D$ to obtain a directed cycle $C$ of length at most $\frac{2N}{\frac{2}{t} N+1}\leq t$.  Notice that the directed cycle $C$ corresponds to a subgraph of $G$
which is a member of $\cR^*_{\leq t}$.  
\end{proof}

Proposition \ref{ring-lower} and Lemma \ref{ring-upper} now yield

\begin{theorem}
$\pi(\cR)=\frac{1}{2}$.
\end{theorem}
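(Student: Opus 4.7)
The plan is to combine the two earlier results directly, with no new ideas required. Proposition \ref{ring-lower} already supplies the lower bound $\pi(\cR)\geq \tfrac{1}{2}$ via the half-graph construction of Example \ref{ring-lower-construction}, so only the matching upper bound $\pi(\cR)\leq \tfrac{1}{2}$ remains.

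For the upper bound, I would exploit the fact that $\cR_{\leq t}\subseteq \cR$ for every $t\geq 2$. Consequently, any $n$-vertex $3$-graph that contains a member of $\cR_{\leq t}$ automatically contains a member of $\cR$, which gives the monotonicity $\ex(n,\cR)\leq \ex(n,\cR_{\leq t})$ and therefore $\pi(\cR)\leq \pi(\cR_{\leq t})$. Invoking Lemma \ref{ring-upper}, this yields
\begin{equation*}
\pi(\cR)\ \leq\ \pi(\cR_{\leq t})\ \leq\ \frac{1}{2}+\frac{1}{t-1}
\end{equation*}
for every $t\geq 2$. Letting $t\to\infty$ gives $\pi(\cR)\leq \tfrac{1}{2}$.

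Combining this with Proposition \ref{ring-lower} immediately yields $\pi(\cR)=\tfrac{1}{2}$. There is no real obstacle here, as both of the hard steps (the construction for the lower bound and the Chv\'atal--Szemer\'edi-based link-graph argument for the upper bound) have already been carried out in the excerpt; the only point worth noting in the write-up is that $\cR$ is an infinite family, so the upper bound is obtained by a limiting argument across the nested sequence $\cR_{\leq 2}\subseteq \cR_{\leq 3}\subseteq\cdots$ rather than from a single application of Lemma \ref{ring-upper}.
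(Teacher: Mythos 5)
Your proof is correct and matches the paper's approach exactly: the paper deduces the theorem directly from Proposition \ref{ring-lower} (the lower bound) and Lemma \ref{ring-upper} (the upper bound, with $t\to\infty$ implicit since $\cR_{\leq t}\subseteq\cR$). You have simply spelled out the monotonicity and limiting step that the paper leaves tacit.
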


We have now determined the Tur\'an density of the entire family of rings.
However,  computing its value for any single member $R_t$ appears to be difficult. 
After all, $R_2$ is just $K_4^3$ and determining $\pi(K_4^3)$ has been notoriously difficult.  
A quick observation that one can make is

\begin{proposition}\label{evenupper}
For any positive integers $p,q$ we have $\pi(R_{pq})\leq \pi(R_p)$. Thus, for all even $t$, we have $\pi(R_t)\leq \pi(K_4^3)$.
\end{proposition}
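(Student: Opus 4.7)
The plan is to exhibit $R_{pq}$ as a sub-hypergraph of the $q$-blow-up $R_p(q)$. Once the containment $R_{pq}\subseteq R_p(q)$ is established, Proposition \ref{subgraph} gives $\pi(R_{pq})\leq \pi(R_p(q))$, and Lemma \ref{erdos-blowup} gives $\pi(R_p(q))=\pi(R_p)$, yielding the first assertion. The second assertion follows because $R_2$ is nothing but $K_4^3$: with indices taken modulo $2$, the edge set $\{x_0y_0x_1,\ x_0y_0y_1,\ x_1y_1x_0,\ x_1y_1y_0\}$ is the collection of all four triples on $\{x_0,y_0,x_1,y_1\}$. Thus for any even $t$ we apply the first assertion with $p=2$ and $q=t/2$.

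To build the embedding, label the vertices of $R_{pq}$ as $u_0,v_0,\ldots,u_{pq-1},v_{pq-1}$ (indices modulo $pq$) and those of $R_p(q)$ as $x_i^{(j)},y_i^{(j)}$ with $i\in\{0,\ldots,p-1\}$, $j\in\{1,\ldots,q\}$. Define
\[
\varphi(u_k)=x_{k\bmod p}^{(\lfloor k/p\rfloor+1)},\qquad \varphi(v_k)=y_{k\bmod p}^{(\lfloor k/p\rfloor+1)}.
\]
This map is injective since the pair $(k\bmod p,\ \lfloor k/p\rfloor+1)$ uniquely determines $k\in\{0,\ldots,pq-1\}$. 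I would then verify that for every $k$, the triples $\{\varphi(u_k),\varphi(v_k),\varphi(u_{k+1})\}$ and $\{\varphi(u_k),\varphi(v_k),\varphi(v_{k+1})\}$ are edges of $R_p(q)$. When $k\not\equiv -1\pmod p$ and $k\neq pq-1$, all three vertices carry the same superscript $j$, and the image is literally an edge of the form $\{x_i,y_i,x_{i+1}\}$ or $\{x_i,y_i,y_{i+1}\}$ of $R_p$ (inserted into one ``clone layer'' of $R_p(q)$). When $k\equiv -1\pmod p$ (which includes the global wrap-around $k=pq-1$, where $\varphi(u_{k+1})=x_0^{(1)}$), the superscript of $\varphi(u_{k+1})$ differs from that of $\varphi(u_k),\varphi(v_k)$, but the underlying subscript triple is $\{x_{p-1},y_{p-1},x_0\}$ (respectively $\{x_{p-1},y_{p-1},y_0\}$), which is an edge of $R_p$; since blow-up edges are obtained by taking \emph{any} clones of an edge's endpoints, this is still an edge of $R_p(q)$.

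No step is a genuine obstacle; the only bookkeeping point is to confirm that the ``boundary'' triples at each $k\equiv -1\pmod p$, together with the final wrap $k=pq-1$, remain edges in $R_p(q)$, and this is exactly what the blow-up is designed to accommodate.
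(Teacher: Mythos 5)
Your proof is correct and follows the same route as the paper: $R_{pq}\subseteq R_p(q)$, then $\pi(R_{pq})\leq\pi(R_p(q))=\pi(R_p)$ via Proposition \ref{subgraph} and Lemma \ref{erdos-blowup}, plus the observation $R_2=K_4^3$. The paper states the containment $R_{pq}\subseteq R_p(q)$ without detail; you have merely supplied the explicit embedding, and it checks out.
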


\begin{proof}
Since $R_{pq}$ is contained in the $q$-blowup of $R_p$, we have $\pi(R_{pq})\leq \pi(R_p(q))=\pi(R_p)$.  Now suppose $t$ is even.  Since $R_2=K_4^3$, we have $\pi(R_t)\leq \pi(R_2)=\pi(K_4^3)$.
\end{proof}

Recall that the conjectured value for $\pi(K_4^3)$ is $\frac{5}{9}$.  For the lower bound,
Tur\'an's construction $T(n)$ is obtained by partitioning $n$ vertices as equally as possible into three sets $V_1, V_2, V_3$ and including as edges all triples of the form, $v_1v_2v_3$, $u_1v_1v_2$, $u_2v_2v_3$, $u_3v_3v_1$ for all $u_i, v_i\in V_i$ (see Figure \ref{turanexample}).  It is straightforward to check that if $T(n)$ contains $R_t$ for 
some $t$, then $t$ must be divisible by $3$. Hence, $T(n)$ contains no $R_t$ when
$t\equiv 1,2\pmod{3}$. So we have the following.

\begin{proposition}\label{evenlower}
For $t\equiv 1,2 \mod 3$, $\pi(R_t)\geq \frac{5}{9}$.
\end{proposition}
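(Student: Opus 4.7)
The plan is to verify the claim in the paragraph preceding the proposition: that Tur\'an's construction $T(n)$ avoids $R_t$ whenever $3 \nmid t$. Combined with the standard computation $e(T(n))/\binom{n}{3}\to 5/9$, this immediately yields $\pi(R_t)\ge 5/9$ for $t\equiv 1,2\pmod 3$.

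I would argue by contradiction, assuming an embedding of $R_t$ into $T(n)$ with parts $V_0,V_1,V_2$ (re-indexing mod $3$). For each vertex $v$ of $R_t$ I record $f(v)\in\mathbb Z/3$, the index of the part containing its image, and for each $i\in\mathbb Z/t$ I track the state $S_i=\{f(x_i),f(y_i)\}$ as an unordered multiset. The triples that are edges of $T(n)$ are exactly those whose color-multiset is $\{0,1,2\}$ or $\{a,a,a+1\}$ for some $a\in\mathbb Z/3$; I use this characterization throughout. From it I extract two local transition claims: (i) if $S_i=\{a,a\}$, then each of the edges $\{x_i,y_i,x_{i+1}\}$ and $\{x_i,y_i,y_{i+1}\}$ already contains two vertices of $V_a$, so the third vertex of each must lie in $V_{a+1}$, forcing $S_{i+1}=\{a+1,a+1\}$; (ii) if $S_i=\{a,a+1\}$, then the color of the third vertex in each of the same two edges cannot be $a+1$ (the multiset $\{a,a+1,a+1\}$ is not an edge-multiset of $T(n)$), so $f(x_{i+1}),f(y_{i+1})\in\{a,a+2\}$, and in particular the only bichromatic successor of $S_i$ is $\{a,a+2\}$.

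By (i), the ``monochromatic'' property of $S_i$ propagates forward, so by the cyclic structure either every $S_i$ is monochromatic or every $S_i$ is bichromatic. In the monochromatic case the sequence is forced into the cycle $\{0,0\}\to\{1,1\}\to\{2,2\}\to\{0,0\}$ of period $3$, so $3\mid t$. In the bichromatic case (ii) makes the transition deterministic and gives the forced cycle $\{0,1\}\to\{0,2\}\to\{1,2\}\to\{0,1\}$, again of period $3$, so $3\mid t$. Either conclusion contradicts $t\equiv 1,2\pmod 3$, which is the desired contradiction.

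The only step that needs a little care is the case-check in (ii), where I must rule out $f(x_{i+1})=a+1$ using the edge condition from \emph{both} of $\{x_i,y_i,x_{i+1}\}$ and $\{x_i,y_i,y_{i+1}\}$; fortunately both reduce to the same forbidden multiset $\{a,a+1,a+1\}$, so the verification is a single line in $\mathbb Z/3$. Nothing else in the argument looks delicate.
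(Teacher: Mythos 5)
Your argument is correct and is exactly the verification the paper has in mind: it exhibits Tur\'an's construction $T(n)$ (edge color-multisets $\{0,1,2\}$ and $\{a,a,a+1\}$) as an $R_t$-free graph of density $\to 5/9$ whenever $3\nmid t$. The paper simply asserts that ``it is straightforward to check that if $T(n)$ contains $R_t$ then $3\mid t$'' and omits the details; your two-case transition analysis on the multisets $S_i=\{f(x_i),f(y_i)\}$ is a clean and complete write-up of that check.
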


So by Propositions \ref{evenupper} and \ref{evenlower}, if Tur\'an's conjecture is true, then we would have
$\pi(R_t)=\frac{5}{9}$ for every even $t$ with $t\equiv 1,2\pmod{3}$.

\begin{figure}[ht]
\centering
\scalebox{.85}{\subfloat[Tur\'an's construction $T(n)$]{\input{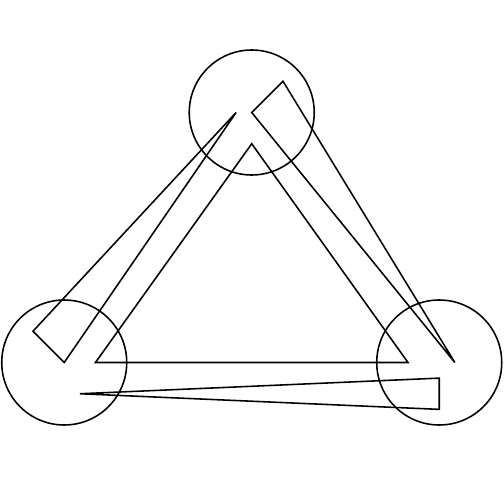_t}
\label{turanexample}}
}~~~~~~~~~~~~~~~~~~~~~~~~~
\scalebox{.85}{\subfloat[The construction $S(n)$]{\input{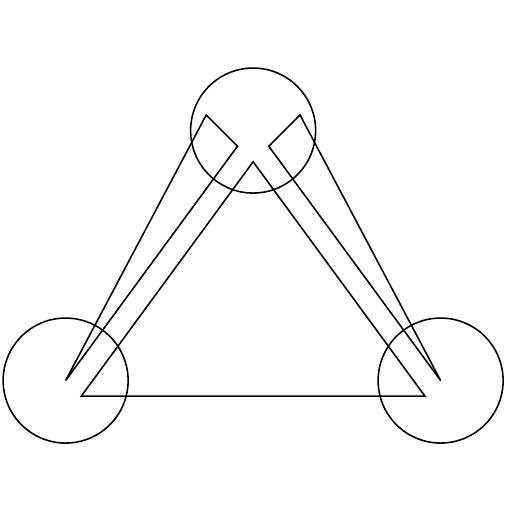_t}
\label{oddexample}}
}
\label{bothexamples}
\caption{}
\end{figure}

Finally, for odd $t$, the following construction shows that 
$\pi(R_t)$ is larger than $\frac{\sqrt{3}}{3}$. Let $S(n)$ be a $3$-graph on $n$ vertices where the vertices are partitioned into three sets $V_1, V_2, V_3$ with sizes $|V_1|=\frac{\sqrt{3}}{3}n$, $|V_2|=|V_3|=(\frac{1}{2}-\frac{\sqrt{3}}{6})n$ whose edges are all triples of the form, $u_1v_1x$, $v_1v_2v_3$ for all $u_i, v_i\in V_i$ and $x\in V_2\cup V_3$ (see Figure \ref{oddexample}).  It is easy to check that $\lim_{n\to \infty} e(S(n))/\binom{n}{3}=\frac{\sqrt{3}}{3}$ and that if $S(n)$ contains $R_t$ then $t$ must be even.  Furthermore, we can iterate this construction inside $V_2$ and $V_3$ to push the density above $\frac{\sqrt{3}}{3}$ while maintaining the fact that there are no odd rings.  

Suppose $|V_1|=(1-\alpha)n$, then the density of $S(n)$ before iterating is $3\alpha(\frac{\alpha^2}{2}-\frac{3\alpha}{2}+1)$; this gives an optimal value of $\frac{\sqrt{3}}{3}> .57735$ when $\alpha=1-\frac{\sqrt{3}}{3}$.  After iterating, the density becomes $3\alpha(\frac{\alpha^2}{2}-\frac{3\alpha}{2}+1)\sum_{i\geq 0}\left(\frac{2}{8^i}\right)^i\alpha^{3i}$; numerical methods give an approximate optimal value of $.588863$ 
when $\alpha=1-\frac{\sqrt{3}}{3}+.015908$.  Thus we have the following.

\begin{proposition}\label{oddlower}
For odd $t$, $\pi(R_t)>.588863$.
\end{proposition}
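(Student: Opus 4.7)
My plan is to establish the two required properties of the base construction $S(n)$: absence of odd rings, and asymptotic density $\tfrac{\sqrt{3}}{3}$; then iterate inside $V_2$ and $V_3$, verify the no-odd-ring property is preserved, and re-optimize $\alpha$ numerically.

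\emph{No odd ring in $S(n)$.} Every edge of $S(n)$ has one of two shapes: \emph{shape A}, with two vertices in $V_1$ and a third in $V_2\cup V_3$; or \emph{shape B}, with one vertex in each of $V_1,V_2,V_3$. Suppose $R_t\subseteq S(n)$ with canonical labelling $x_0,y_0,\ldots,x_{t-1},y_{t-1}$. For each $i$, define the class-multiset $\tau_i=\{\mathrm{cl}(x_i),\mathrm{cl}(y_i)\}$; since $\{x_i,y_i\}$ lies in the two edges $\{x_i,y_i,x_{i+1}\}$ and $\{x_i,y_i,y_{i+1}\}$, the value of $\tau_i$ restricts which shapes those edges take and thus which values $\tau_{i+1}$ can assume. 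A short case check shows: if $\tau_i=\{V_1,V_1\}$ then both edges must be shape A, forcing $x_{i+1},y_{i+1}\in V_2\cup V_3$, and of the three options for $\tau_{i+1}$ only $\{V_2,V_3\}$ survives -- the other two would need the pair $\{x_{i+1},y_{i+1}\}$ to be contained in edges with two vertices inside $V_2$ (or inside $V_3$), which $S(n)$ lacks. Symmetrically, $\tau_i=\{V_2,V_3\}$ forces $\tau_{i+1}=\{V_1,V_1\}$. The remaining pair types $\{V_1,V_2\},\{V_1,V_3\},\{V_2,V_2\},\{V_3,V_3\}$ all lead to an impossible next step for the same reason, so the sequence $\tau_0,\ldots,\tau_{t-1}$ alternates between $\{V_1,V_1\}$ and $\{V_2,V_3\}$, making $t$ even.

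\emph{Density and iteration.} Counting edges, $e(S(n))=\binom{|V_1|}{2}(|V_2|+|V_3|)+|V_1||V_2||V_3|$; dividing by $\binom{n}{3}$ and letting $n\to\infty$ with $|V_1|=(1-\alpha)n$, $|V_2|=|V_3|=\tfrac{\alpha n}{2}$ gives $g(\alpha):=3\alpha\bigl(1-\tfrac{3\alpha}{2}+\tfrac{\alpha^2}{2}\bigr)$, maximized at $\alpha=1-\tfrac{\sqrt{3}}{3}$ with value $\tfrac{\sqrt{3}}{3}\approx 0.57735$. To beat this, recursively place a scaled copy of the same construction inside each of $V_2$ and $V_3$, producing a nested $3$-graph $T(n;\alpha)$. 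Its density satisfies $d(\alpha)=g(\alpha)+2(\alpha/2)^3 d(\alpha)$, so $d(\alpha)=g(\alpha)/(1-\alpha^3/4)$, and a numerical optimization locates a maximum above $0.588863$ near $\alpha\approx 1-\tfrac{\sqrt{3}}{3}+0.015908$.

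It remains to verify that $T(n;\alpha)$ still contains no odd ring, which I would do by induction on $n$. The edge set of $T(n;\alpha)$ consists of shapes A and B at the top level, together with edges entirely inside $V_2$ or entirely inside $V_3$ coming from the recursive copies. Rerunning the pair-type transition analysis with these extra edge classes, the types $\{V_2,V_2\}$ and $\{V_3,V_3\}$ become absorbing: their completing edges can only live inside the recursive copy in $V_2$ (resp.\ $V_3$), so the next pair is forced into the same class. Hence either the ring avoids these types and the earlier alternation forces $t$ even; or it enters one of them, which pins the entire ring inside $V_2$ or $V_3$, and the inductive hypothesis applied to the smaller instance closes the case. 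The main obstacle is exactly this iteration step -- one must be certain that no odd ring can straddle the top-level partition and a recursive substructure, and the absorbing property of the same-class pair types is what makes the induction go through.
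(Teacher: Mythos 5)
Your strategy---exhibit $S(n)$, rule out odd rings via a transition analysis on the top-level pair classes $\tau_i=\{\mathrm{cl}(x_i),\mathrm{cl}(y_i)\}$, iterate inside $V_2$ and $V_3$, and re-optimize $\alpha$---is exactly the paper's. (The paper leaves the no-odd-ring claim as ``easy to check'' and does not spell out the transition argument, so your extra detail is welcome.) Your closed form $d(\alpha)=g(\alpha)/(1-\alpha^3/4)$ is the correct iterated density; the factor $\sum_{i\ge 0}(2/8^i)^i\alpha^{3i}$ printed in the paper looks like a typo for the geometric series $\sum_{i\ge 0}(\alpha^3/4)^i$.

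One step in your transition analysis is wrong as written, though the conclusion survives. You assert that the pair types $\{V_1,V_2\}$ and $\{V_1,V_3\}$ ``lead to an impossible next step for the same reason'' as $\{V_2,V_2\}$, $\{V_3,V_3\}$, and conclude that the sequence of $\tau_i$ must alternate between $\{V_1,V_1\}$ and $\{V_2,V_3\}$. That is false: if $\tau_i=\{V_1,V_2\}$ with $x_i\in V_1$, $y_i\in V_2$, the edge $\{x_i,y_i,x_{i+1}\}$ is realizable in $S(n)$ as shape~A with $x_{i+1}\in V_1$ or as shape~B with $x_{i+1}\in V_3$, so $\tau_{i+1}\in\{\{V_1,V_1\},\{V_1,V_3\},\{V_3,V_3\}\}$ is perfectly possible (indeed $S(n)$ contains many copies of $R_2=K_4^3$ realizing the $\{V_1,V_2\}\to\{V_1,V_3\}\to\{V_1,V_2\}$ pattern). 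What saves the argument is the structure of the transition digraph on the non-dead types: after excluding $\{V_2,V_2\}$ and $\{V_3,V_3\}$, the arcs are $\{V_1,V_1\}\leftrightarrow\{V_2,V_3\}$, $\{V_1,V_2\}\leftrightarrow\{V_1,V_3\}$, and the one-way arcs $\{V_1,V_2\}\to\{V_1,V_1\}$, $\{V_1,V_3\}\to\{V_1,V_1\}$; nothing re-enters $\{V_1,V_2\}$ or $\{V_1,V_3\}$ from $\{V_1,V_1\}$. Hence every closed walk lies entirely in one of the two $2$-cycles, so $t$ is even. You should replace the ``impossible next step'' claim with this observation; the rest of your argument, in particular the absorbing treatment of $\{V_2,V_2\},\{V_3,V_3\}$ under iteration and the ensuing induction, is sound.
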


\begin{definition}
Let $Q_3$ be obtained by adding the edges $x_1y_1x_0, x_1y_1y_0$ to $R_3$.  
\end{definition}

Note the following simple observation.

\begin{proposition}\label{Q3}
For odd $t$ at least $5$, $R_t$ is contained in the blow-up of $Q_3$.
\end{proposition}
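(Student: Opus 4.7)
The plan is to produce a hypergraph homomorphism $\phi: V(R_t)\to V(Q_3)$ that sends every edge of $R_t$ to an edge of $Q_3$; a sufficiently large blow-up of $Q_3$ then contains a copy of $R_t$, by taking enough clones of each vertex of $Q_3$ to absorb all preimages under $\phi$. To exploit the cyclic structure of $R_t$, I will restrict attention to maps of the form $\phi(x_i)=x_{\sigma(i)}$ and $\phi(y_i)=y_{\sigma(i)}$ for some function $\sigma:\{0,\ldots,t-1\}\to\{0,1,2\}$ (indices on the left taken mod $t$), where the subscripts on the right-hand side refer to the labels of $V(Q_3)$.

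Under this restriction, both edges $\{x_i,y_i,x_{i+1}\}$ and $\{x_i,y_i,y_{i+1}\}$ of $R_t$ are mapped into $E(Q_3)$ iff $\{x_{\sigma(i)},y_{\sigma(i)},x_{\sigma(i+1)}\}$ and $\{x_{\sigma(i)},y_{\sigma(i)},y_{\sigma(i+1)}\}$ are edges of $Q_3$. Inspection of the eight edges of $Q_3$ shows that the allowed transitions $\sigma(i)\to\sigma(i+1)$ form the digraph $D$ on $\{0,1,2\}$ with arcs $0\to 1$, $1\to 0$, $1\to 2$, and $2\to 0$; the two extra edges $x_1y_1x_0$ and $x_1y_1y_0$ that distinguish $Q_3$ from $R_3$ are exactly what creates the ``backward'' transition $1\to 0$. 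So the proposition reduces to finding a closed walk of length $t$ in $D$.

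The digraph $D$ contains the $2$-cycle $0\to 1\to 0$ and the $3$-cycle $0\to 1\to 2\to 0$, both passing through $0$. For odd $t\ge 5$, write $t=3+2a$ with $a=(t-3)/2\ge 1$; concatenating at vertex $0$ one traversal of the $3$-cycle with $a$ traversals of the $2$-cycle produces a closed walk of length $t$ in $D$, yielding the required $\sigma$ and hence $\phi$. The one design choice of any interest is restricting $\phi$ to match pairs $\{x_i,y_i\}$ to pairs $\{x_j,y_j\}$: this is natural because in $Q_3$ only these three pairs have two or more co-neighbors, so any pair playing the role of $\{\phi(x_i),\phi(y_i)\}$ must be one of them; past that, I expect the argument to be routine and no serious obstacle to arise.
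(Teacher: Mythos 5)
The paper offers no proof of Proposition~\ref{Q3} (it is labelled a ``simple observation''), so there is no authors' argument to compare against; you are filling a gap. Your argument is correct and is the natural one: restrict to homomorphisms that send matched pairs $\{x_i,y_i\}$ to matched pairs of $Q_3$, translate the constraint into the transition digraph $D$ on $\{0,1,2\}$ with arcs $0\to1$, $1\to2$, $2\to0$, $1\to0$, note that $D$ contains a $2$-cycle and a $3$-cycle through $0$, and use $\gcd(2,3)=1$ to realize any $t\ge 2$ (in particular odd $t\ge 5$) as a closed walk length. I verified the arc set of $D$ against the eight edges of $Q_3$, and each arc has no self-loop, so the image of every edge of $R_t$ is a genuine $3$-set and an edge of $Q_3$, which is exactly what embedding into a large enough blow-up requires.

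One small factual slip in your closing remark: it is not true that in $Q_3$ only the three matched pairs have co-degree at least $2$. The four vertices $x_0,y_0,x_1,y_1$ span a copy of $K_4^3$ in $Q_3$, so all six pairs among them have co-degree $2$; in particular the mixed pairs $\{x_0,x_1\},\{x_0,y_1\},\{y_0,x_1\},\{y_0,y_1\}$ each have two co-neighbors. So the restriction to matched pairs is a convenient sufficient choice, not a forced one. This does not affect the validity of your proof, which nowhere uses the necessity claim, but the justification should be softened to ``a natural simplifying restriction'' rather than a structural necessity.
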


The final results in this section are obtained by using Razborov's flag algebra calculus.  Since the upper bounds are not tight (and we don't intend to formally publish the bounds obtained from these calculations), we refer the reader to \cite{R}, \cite{R2}, \cite{BT} for an explanation of the method and its applications.

\begin{proposition}\label{uppers}~
\begin{enumerate}
\item 
For even $t$, $\pi(R_t)<.561666$.

\item For odd $t\geq 5$, $\pi(R_t)< 0.594312$

\item For $t$ an odd multiple of $3$, $\pi(R_t)< .594258$.

\item For $t$ an even multiple of $3$, $\pi(R_t)< .512303$.
\end{enumerate}
\end{proposition}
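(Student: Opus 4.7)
The plan is to reduce each of the four items to an upper bound on $\pi(H)$ for a single $3$-graph $H$, and then to apply Razborov's flag-algebra semidefinite-programming (SDP) method to that $H$. The reductions are all immediate from earlier results in the paper. For item (i), every even $R_t$ lies in a blow-up of $R_2=K_4^3$, so by Proposition \ref{evenupper} and Lemma \ref{erdos-blowup} it suffices to show $\pi(K_4^3)<.561666$; this bound is already known in the flag-algebra literature (Razborov \cite{R2}) and would simply be cited. For item (ii), Proposition \ref{Q3} says every odd $R_t$ with $t\ge 5$ embeds in a blow-up of $Q_3$, so the target becomes $\pi(Q_3)<.594312$. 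For item (iii), $R_{3k}$ lies in the $k$-blow-up of $R_3$ (Proposition \ref{evenupper} with $p=3$), so the target becomes $\pi(R_3)<.594258$. For item (iv), $R_{6k}$ lies in the $k$-blow-up of $R_6$, and the target becomes $\pi(R_6)<.512303$; note that the easy bound $\pi(R_6)\le\pi(K_4^3)<.561666$ is too weak here, so a flag-algebra computation directly on $R_6$ is genuinely needed.

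For each of the three remaining target graphs $H\in\{Q_3,R_3,R_6\}$ I would run a standard flag-algebra computation. Fix a small integer $N$ (probably $N\in\{6,7\}$ suffices for the quoted numbers); enumerate up to isomorphism all $H$-free $3$-graphs on $N$ unlabeled vertices; choose a family of rooted types $\sigma$ together with $\sigma$-flags; and assemble the SDP whose variables are the densities of the $N$-vertex $H$-free flags in a hypothetical large $H$-free $3$-graph $G$, along with PSD matrices $M_\sigma$ indexed by the types. The constraints encode the Cauchy--Schwarz-type positivities that underlie the flag-algebra framework, and the objective is an upper bound on the edge density of $G$. Solving the SDP numerically and then rounding its dual matrices to exact rationals in the style of Baber--Talbot \cite{BT} converts the numerical optimum into a rigorously certified upper bound on $\pi(H)$.

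The real obstacle is computational, not conceptual: one must choose $N$ large enough that the SDP optimum falls below each target threshold (the $.512303$ bound on $R_6$ being the most demanding), enumerate the potentially large collection of $H$-free $N$-vertex $3$-graphs, and then reliably solve and exactly certify a sizable SDP. Because none of the four bounds are claimed to be tight, no matching extremal construction is required; the proof is complete once the four SDP outputs are rounded and verified.
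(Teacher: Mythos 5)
Items (i)--(iii) of your proposal match the paper exactly: the reductions to $K_4^3$, $Q_3$, and $R_3$ via blow-ups plus Lemma \ref{erdos-blowup}, followed by flag-algebra SDP certificates for each $6$- or $4$-vertex target, are what the authors do.

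Item (iv) is where you diverge, and your route has a genuine problem. You reduce $R_{6k}\subseteq R_6(k)$ and propose a flag-algebra computation directly against $R_6$, explicitly remarking that this ``is genuinely needed.'' But $R_6$ has $12$ vertices, so encoding $R_6$-freeness in the SDP requires working with admissible $3$-graphs on at least $N\geq 12$ unlabeled vertices, and the count of such isomorphism classes is astronomically beyond what flag-algebra solvers can enumerate. Your parenthetical guess that $N\in\{6,7\}$ suffices cannot apply to $R_6$. The paper avoids this by a different reduction: when $t$ is divisible by both $2$ and $3$, the ring $R_t$ embeds in a blow-up of $R_2$ \emph{and} in a blow-up of $R_3$, so by Proposition \ref{subgraph} together with Lemma \ref{erdos-blowup} one gets $\pi(R_t)\leq \pi(\{R_2,R_3\})$, and the flag-algebra SDP forbidding the two small graphs $R_2$ (four vertices) and $R_3$ (six vertices) is run instead. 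That computation is entirely feasible and yields $\pi(\{R_2,R_3\})<.512303$. The paper even emphasizes this at the end of the section: the whole point is that \emph{only} small $3$-graphs ever need to be fed to the flag-algebra machinery. So the conceptual idea you are missing is that one can forbid a finite \emph{family} of small graphs whose simultaneous blow-ups all contain $R_t$, rather than a single large graph.
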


\begin{proof}
In each case we use Lemma \ref{codegreeblowup} to transfer a statement about the blow-up of a graph to a statement about $R_t$.
\begin{enumerate}
\item Proposition \ref{evenupper} shows $R_t$ is contained in the blow-up of $K_4^3$ and flag algebra calculations give $\pi(K_4^3)<.561666$ (see \cite{R}).

\item Proposition \ref{Q3} shows $R_t$ is contained in the blow-up of $Q_3$ and flag algebra calculations give $\pi(Q_3)< 0.594312$.

\item $R_t$ is contained in the blow-up of $R_3$ and flag algebra calculations give $\pi(R_3)< .594258$

\item $R_t$ is contained in the blow-up of $R_2$ and $R_3$ and flag algebra calculations give $\pi(\{R_2, R_3\})< .512303$

\end{enumerate}
\end{proof}

The results of this section are summarized below, with the lower bounds coming from Propositions \ref{ring-lower}, \ref{evenlower}, \ref{oddlower} and the upper bounds coming from Proposition \ref{uppers}.  Perhaps the most interesting thing about the upper bounds for rings is that it is possible to get nearly tight results for every value of $t$ using only flag algebra calculations for small $3$-graphs and Lemma \ref{codegreeblowup}.

\begin{equation*}
\left.
\begin{IEEEeqnarraybox}[
\IEEEeqnarraystrutmode
\IEEEeqnarraystrutsizeadd{2pt}
{2pt}
][c]{lCl}
\text{ if } t\equiv 0\bmod 6, ~~~1/2\\
\text{ if } t\equiv 1\bmod 6, ~~~.588863\\
\text{ if } t\equiv 2\bmod 6, ~~~5/9\\
\text{ if } t\equiv 3\bmod 6, ~~~.588863\\
\text{ if } t\equiv 4\bmod 6, ~~~5/9\\
\text{ if } t\equiv 5\bmod 6, ~~~.588863
\end{IEEEeqnarraybox}
\, \right\} \quad
\leq\pi(R_t)< \quad
\left\{ \,
\begin{IEEEeqnarraybox}[
\IEEEeqnarraystrutmode
\IEEEeqnarraystrutsizeadd{2pt}
{2pt}
][c]{lCl}
.512303\\
.594312\\
.561666\\
.594258\\
.561666\\
.594312
\end{IEEEeqnarraybox}
\right.
\label{ringsummary}
\end{equation*}

%
%

Given the results of this section and Theorem \ref{ring-codegree}, it would be interesting to solve the following problem.

\begin{problem}
Determine $\pi(R_t)$ or $\pi_2(R_t)$ for each fixed value of $t$.
\end{problem}

\section{Concluding remarks} \label{remarks}

Let $q$ be a prime power and let $PG_2(q)$ be $(q+1)$-graph with vertex set equal to the one dimensional subspaces of $\mathbb{F}_q^3$ and edges corresponding to the two-dimensional subspaces of $\mathbb{F}_q^3$.  We call $PG_2(q)$ the \emph{projective geometry} of dimension $2$ over $\mathbb{F}_q$; note that $PG_2(2)$ is the Fano plane.  
In \cite{K}, Keevash also proved the following more general theorem about projective geometries
\begin{theorem}\label{KeevashPG}
$\ex_q(PG_2(q))\leq \frac{n}{2}$
\end{theorem}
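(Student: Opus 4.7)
The plan is to mimic the proof of Theorem \ref{F*} via the following analogue of $\bF^*$: let $PG_2^*(q)$ be the $(q+1)$-graph obtained from the complete $(q+1)$-partite $(q+1)$-graph on parts $W_1,\dots,W_{q+1}$ of size $q$ by adding a new vertex $a$ together with the $q+1$ edges $\{a\}\cup W_i$. Since $PG_2(q)\subseteq PG_2^*(q)$, it suffices to prove $\ex_q(n,PG_2^*(q))\leq\lfloor n/2\rfloor$ for large $n$.

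I would first introduce $(q+1)$-uniform rings. Let $R_t^{(q+1)}$ be the $(q+1)$-graph on $V_0\cup\dots\cup V_{t-1}$ with each $|V_i|=q$, whose edges are $V_i\cup\{v\}$ for $v\in V_{i+1}$ (indices mod $t$); let $\cR^{(q+1)}_{\leq t}$ denote the corresponding family. The cyclic constraint (if $V_i\subseteq S$ then $V_{i+1}\cap S=\emptyset$) gives $R_t^{(q+1)}$ the $(qt,(q-1)t+1)$-property in the sense of Definition \ref{lmproperty}. Building an auxiliary digraph on $\binom{V(G)}{q}$ with $A\to B$ whenever $A\cup\{b\}\in E(G)$ for all $b\in B$ gives $\delta^+\geq\binom{\delta_q(G)}{q}$, so Theorem \ref{ChSz} yields the analogue of Theorem \ref{ring-codegree}: $\pi_q(\cR^{(q+1)}_{\leq t})\to 0$ as $t\to\infty$. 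Combined with Lemma \ref{blowup}, for $\delta_q(G)\geq\lfloor n/2\rfloor+1$ and $t$ large one finds a $q$-blowup of some $R_s^{(q+1)}$ (with $s\leq t$) in $G$.

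For each ring vertex $v$, let $C(v)$ be its $q$-element clone group in the blowup and $X_v=N_G(C(v))$, so $|X_v|\geq\lfloor n/2\rfloor+1$. Summing over the $qs$ ring vertices and applying pigeonhole as in \eqref{exact} yields $u^*\in V(G)$ lying in more than $qs/2$ of the $X_v$'s. If these covered ring vertices contain an edge $e=\{v_1,\dots,v_{q+1}\}$ of the ring, then $\{u^*\}\cup C(v_1)\cup\dots\cup C(v_{q+1})$ spans a copy of $PG_2^*(q)$ in $G$: the $q$-blowup of $e$ supplies the complete $(q+1)$-partite $(q+1)$-graph on $C(v_1),\dots,C(v_{q+1})$, and each condition $u^*\in X_{v_j}$ supplies an edge $\{u^*\}\cup C(v_j)$.

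The main obstacle is that the simple ring's $(qt,(q-1)t+1)$-property is too weak: pigeonhole only provides more than $qs/2$ covered ring vertices, yet one needs more than $(q-1)s$ to force an edge, and $qs/2>(q-1)s$ only for $q<2$. The Fano case $q=2$ is exactly tight, but for $q\geq 3$ the gap grows, approaching a factor of $2$. A natural remedy is to replace the simple ring with a \emph{dense} ring whose edges are \emph{all} $(q+1)$-subsets of each $V_i\cup V_{i+1}$: the cyclic constraint then becomes $|S\cap V_i|+|S\cap V_{i+1}|\leq q$, which sums to $|S|\leq qt/2$ and matches pigeonhole exactly. The real difficulty is showing that this dense-ring family still has vanishing $q$-codegree density; since an arc of the corresponding auxiliary digraph must now simultaneously certify all $\binom{2q}{q+1}$ different $(q+1)$-subset memberships (rather than the $q$ needed by the simple ring), the direct Chv\'atal--Szemer\'edi argument does not suffice, and one likely needs either a multistage construction inside the $q$-blowup of a simple ring, or a Mubayi--R\"odl style supersaturation, to saturate each $V_i\cup V_{i+1}$ with the required subsets. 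Working out this quantitative codegree bound is the technical heart of the proposed proof.
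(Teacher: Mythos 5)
This theorem is not proved in the paper at all: it is Keevash's result, quoted from \cite{K}, whose actual proof goes through the hypergraph regularity/quasi-randomness machinery developed there. The paper brings it up precisely to explain the scope and the limits of the rings method, and the discussion that follows the theorem is essentially the same analysis you carried out. The authors observe that their argument would extend to $PG_2(q)$ for $q\geq 3$ \emph{if} one could exhibit a finite family $\mathcal{F}^{q+1}$ of $(q+1)$-graphs each having the $(2t,t+1)$-property (for its own $t$) with $\pi_q(\mathcal{F}^{q+1})<\tfrac12$; they then prove that the $(l,m)$-property must in fact satisfy $l$ even and $m=l/2+1$ for the method to have any chance (your $qs/2$ versus $(q-1)s$ computation is exactly this point), and they leave the existence of such a family as Problem \ref{lm}, an open problem.

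Your proposal reproduces that analysis faithfully but stops exactly where the open problem begins. You correctly diagnose that the simple $(q+1)$-uniform ring $R_t^{(q+1)}$ has only the $(qt,(q-1)t+1)$-property, which is too weak once $q\geq 3$, and you correctly identify the dense ring (all $(q+1)$-subsets of each $V_i\cup V_{i+1}$) as having the right independence number. But the step you flag as ``the technical heart'' --- proving that the dense-ring family has small $q$-codegree density --- is not a detail to be worked out; it is the entire missing content, and the directed-cycle argument you used for simple rings does not give it (an arc in the auxiliary digraph only certifies $q$ of the $\binom{2q}{q+1}$ required $(q+1)$-subsets, as you note). So the proposal does not constitute a proof of the theorem, and it does not match the paper's proof because the paper gives none; the paper instead records the conditional nature of the approach and poses Problem \ref{lm}. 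Keevash's genuine proof is along entirely different lines.
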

\noindent
Furthermore, there is a nearly matching lower bound when $q$ is an odd prime power (see \cite{KZ} and \cite{K}).  The proof we present in Section \ref{fano} relies on the fact that there is a family of $3$-graphs $\mathcal{R}_{\leq t}$, such that each member $R_i\in \mathcal{R}_{\leq t}$ has the $(2i, i+1)$-property and $\pi_2(\mathcal{R}_{\leq t})<\frac{1}{2}$.  In fact, our same proof could be used to give a simple proof of Theorem \ref{KeevashPG} if there was an affirmative answer to the following question.

\begin{problem}\label{lm}
Let $k\geq 4$.  Does there exist a finite family $\mathcal{F}^k$ of $k$-graphs such that for each $F\in \mathcal{F}^k$ there exists a positive integer $t$ such that $F$ has the $(2t, t+1)$-property and $\pi_{k-1}(\mathcal{F}^k)<\frac{1}{2}$.
\end{problem}
\noindent
It seems conceivable that obtaining a $k$-graph with the $(l,m)$ property for some other values of $l$ and $m$ might give us the same benefit and be easier to obtain; however, this is not the case. On one hand, we must have $l\geq 2(m-1)$ so that equation \eqref{exact} holds.  On the other hand, when $m\leq \ceiling{\frac{l}{2}}$ the complete balanced bipartite $k$-graph $B^k(n)$, which has $\delta_{k-1}(B^k(n))\geq \floor{n/2}$, does not contain any subgraph with the $(l,m)$-property (any subgraph of $B^k(n)$ with $l$ vertices must contain an independent set of size $\ceiling{\frac{l}{2}}$).  So we must have $\ceiling{\frac{l}{2}}<m\leq \frac{l}{2}+1$, which implies that $l$ is even and $m=\frac{l}{2}+1$.

A different problem is the following. Instead of determining the co-degree threshold for a single copy of $H$ in $G$, one can ask about the co-degree threshold for $\frac{|V(G)|}{|V(H)|}$ vertex disjoint copies of $H$ in $G$ (assuming $|V(H)|$ divides $|V(G)|$).  This has been referred to as the \emph{tiling} or \emph{factoring} problem and received much attention lately.  Interestingly, the co-degree threshold for tiling with $K_4^3$ and $K_4^3-e$ have been determined (see \cite{KM}, \cite{LM2}), but the co-degree threshold for a single copy of $K_4^3$ or $K_4^3-e$ is still unknown and appears to be difficult.  Since the co-degree threshold for a single copy of $\bF$ is known and seems to be much easier than $K_4^3$ or $K_4^3-e$, it would be interesting to determine the co-degree threshold for tiling with $\bF$.

\begin{problem}
Let $n$ be divisible by $7$ and let $G$ be a $3$-graph on $n$ vertices.  Determine the minimum value $d$ such that $\delta_2(G)\geq d$ implies that $G$ contains $\frac{n}{7}$ vertex disjoint copies of $\bF$.
\end{problem}

The relationship between the edge density of a hypergraph and its subgraphs with large co-degree is  also very intriguing. Even the following simple questions do not seem to have an easy answer. A $k$-graph $H$ is said to {\it cover pairs} if $H$ has at least $k+1$ vertices and every pair of vertices lies in some edge, i.e. $\delta_2(H)\geq 1$.

\begin{problem} \label{problem1}
What is 
$$\limsup_{n\to \infty} \left\{\frac{e(G)}{\binom{n}{k}}: G \subseteq \binom{[n]}{k},  \mbox{ $G$ contains no subgraph that covers pairs} \right\}?$$
\end{problem}

Since $K^3_4-e$ covers pairs, for $k=3$ the answer to Problem \ref{problem1} is
certainly no more than $\pi(K^3_4-e)$, which is known to be at most $0.2871$. 
\begin{problem}
Given any positive integer $s$, what is 
$$\limsup_{n\to \infty} \left\{\frac{e(G)}{\binom{n}{k}}:  G\subseteq \binom{[n]}{k}, \mbox{ 
$G$ contains no subgraph on $s$ vertices that covers pairs} \right\}?$$
\end{problem}

More generally, one may ask

\begin{problem}
Given positive integers $s,t$, what is 
$$\limsup_{n\to \infty} \left\{\frac{e(G)}{\binom{n}{k}}: G\subseteq\binom{[n]}{k},  \mbox{ 
$G$ contains no subgraph $H$ on $s$ vertices with $\delta_2(H)\geq t$} \right\}?$$
\end{problem}



\begin{thebibliography}{00}
\bibitem{BT} R. Baber and J. Talbot, Hypergraphs do jump, \emph{Combin. Probab. Comput.} \textbf{20}, no. 2 (2011), 161--171.

\bibitem{CH} L. Caccetta and R. H\"aggkvist, On minimal digraphs with given girth, Proceedings of the Ninth Southeastern Conference on Combinatorics, Graph Theory, and Computing (Florida Atlantic Univ., Boca Raton, Fla., 1978), pp. 181--187, Congress. Numer., XXI, Utilitas Math., Winnipeg, Man., 1978. 

\bibitem{CS} V. Chv\'atal and E. Szemer\'edi, Short cycles in directed graphs, \emph{J. Combin. Theory Ser. B} \textbf{35}, no. 3 (1983), 323--327.

\bibitem{CN} A. Czygrinow and B. Nagle, A note on codegree problems for hypergraphs, \emph{Bull. Inst. Combin. Appl.} \textbf{32} (2001), 63--69.

\bibitem{DF} D. de Caen and Z. Furedi, The maximum size of $3$-uniform hypergraphs not containing a Fano plane,
\emph{J. Combin. Theory Ser. B} \textbf{78}, no. 2 (2000), 274--276.

\bibitem{E} Erd\H{o}s, On extremal problems of graphs and generalized graphs,
\emph{Israel J. Math.} \textbf{2} (1964), 183--190.

\bibitem{FS} Z. F\"uredi and M. Simonovits, Triple systems not containing a Fano configuration
\emph{Combin. Probab. Comput.} \textbf{14}, no. 4 (2005), 467--484.

\bibitem{K} P. Keevash, A hypergraph regularity method for generalized Tur\'an problems,
\emph{Random Structures Algorithms} \textbf{34}, no.1 (2009), 123--164. 

\bibitem{K2} P. Keevash, Hypergraph Tur\'an problems, Surveys in Combinatorics, 2011.

\bibitem{KM} P. Keevash, R. Mycroft, A geometric theory for hypergraph matching, arXiv
preprint \href{http://arxiv.org/abs/1108.1757}{arXiv:1108.1757v1} (Aug 2011).

\bibitem{KS} P. Keevash and B. Sudakov, The Tur\'an number of the Fano plane, \emph{Combinatorica} \textbf{25}, no.5 (2005), 561--574.

\bibitem{KZ} P. Keevash and Y. Zhao, Codegree problems for projective geometries,
\emph{J. Combin. Theory Ser. B} \textbf{97}, no. 6 (2007), 919--928. 


\bibitem{LM} A. Lo, K. Markstr\"om, $l$-degree Tur\'an density, arXiv preprint \href{http://arxiv.org/abs/1210.5726}{arXiv:1210.5726v1} (Oct 2012).

\bibitem{LM2} A. Lo, K. Markstr\"om, Minimum codegree threshold for $(K_4^3-e)$-factors, \emph{J. Combin. Theory Ser. A} \textbf{120}, no. 3 (2013), 708--721.

\bibitem{M} D. Mubayi, The co-degree density of the Fano plane 
\emph{J. Combin. Theory Ser. B} \textbf{95}, no. 2 (2005), 333--337. 

\bibitem{MR} D. Mubayi and V. R\"odl, On the Tur\'an number of triple systems, \emph{J. Combin. Theory Ser. A} \textbf{100}, no. 1 (2002), 136--152.

\bibitem{MZ} D. Mubayi and Y. Zhao, Co-degree densities of hypergraphs, \emph{J. Combin. Theory Ser. A} \textbf{114}
(2007), 1118--1132.

\bibitem{R} A. Razborov, On $3$-hypergraphs with forbidden $4$-vertex configurations, \emph{SIAM J. of Disc. Math.} \textbf{24}, no. 3 (2010), 964--963.

\bibitem{R2} A. Razborov, Flag Algebras: an Interim Report, \\ \href{http://people.cs.uchicago.edu/~razborov/files/flag_survey.pdf}{http://people.cs.uchicago.edu/$_{\widetilde{~}}$razborov/files/flag\text{\_}survey.pdf} (Feb. 2013).


\bibitem{Sos} V. S\'os, Remarks on the connection of graph theory, finite geometry and block designs, Colloquio Internazionale sulle Teorie Combinatorie (Roma, 1973), Tomo II, pp. 223--233. Atti dei Convegni Lincei, No. 17, Accad. Naz. Lincei, Rome, 1976. 

\bibitem{T} P. Tur\'an, Eine Extremalaufgabe aus der Graphentheorie 
\emph{Mat. Fiz. Lapok} \textbf{48}, (1941), 436--452. 
\end{thebibliography}
\end{document}